\documentclass[pdflatex,sn-mathphys-num,colorlinks=true, linkcolor=blue, urlcolor=blue, citecolor=blue]{sn-jnl} 
\usepackage{bm}
\usepackage{graphicx}
\usepackage{adjustbox}
\usepackage{amsthm,amssymb,amsmath}
\usepackage[T1]{fontenc}
\usepackage[utf8]{inputenc}
\usepackage{enumitem}
\usepackage{array}
\usepackage{setspace}
\usepackage{float}
\usepackage{mathtools}
\usepackage{extarrows}
\usepackage[all,cmtip]{xy}
\usepackage{tikz-cd}
\usepackage{relsize}
\usepackage{pifont}

\newtheorem{lemma}{Lemma}[section]

\newtheorem{theorem}[lemma]{Theorem}

\newtheorem{proposition}[lemma]{Proposition}
\newtheorem{corollary}[lemma]{Corollary}

\newtheorem{definition}[lemma]{Definition}
\newtheorem{remark}[lemma]{Remark}

\usepackage{blindtext}

\usepackage{graphicx}%
\usepackage{multirow}%

\usepackage{amsthm}%

\usepackage[title]{appendix}%
\usepackage{xcolor}%
\usepackage{textcomp}%
\usepackage{manyfoot}%
\usepackage{booktabs}%
\usepackage{algorithm}%
\usepackage{algorithmicx}%
\usepackage{algpseudocode}%
\usepackage{listings}%
\numberwithin{equation}{section}

\newcommand{\R}{\mathbb{R}}

\begin{document}

\title[Positive cones of a finite quotient]{On positive cones of finite quotients of a normal variety}

\author[1]{\fnm{Ashima} \sur{Bansal}}
\email{ashima.bansal@snu.edu.in}

\author[2]{\fnm{Indranil} \sur{Biswas}}
\email{indranil.biswas@snu.edu.in, indranil29@gmail.com}

\author[3]{\fnm{Souradeep} \sur{Majumder}}
\email{souradeep@labs.iisertirupati.ac.in}

\affil[1]{\orgdiv{Department of mathematics}, \orgname{Shiv Nadar University}, \orgaddress{\street{NH91, Tehsil Dadri}, \city{Greater Noida}, \postcode{201314}, \state{Uttar Pradesh}, \country{India}}}

\affil[2]{\orgdiv{Department of mathematics}, \orgname{Shiv Nadar University}, \orgaddress{\street{NH91, Tehsil Dadri}, \city{Greater Noida}, \postcode{201314}, \state{Uttar Pradesh}, \country{India}}}

\affil[3]{\orgdiv{Department of mathematics}, \orgname{Indian Institute of Science Education and Research Tirupati}, \orgaddress{\street{Srinivasapuram, Yerpedu Mandal}, \city{Tirupati}, \postcode{517619}, \state{Andhra Pradesh}, \country{India}}}

\abstract{We study positivity properties of finite flat quotients of normal projective varieties. We show that the numerical groups and positive cones of these quotient varieties are related to those of the original varieties.

\textbf{Keywords:} Finite quotients, pseudoeffective cones, nef cones.

\textbf{MSC Number:} {14C20; 14L24; 14L30}
}

\footnotetext[1]{ \texttt{ashima.bansal@snu.edu.in}}
\footnotetext[2]{ \texttt{indranil.biswas@snu.edu.in, indranil29@gmail.com}}
\footnotetext[3]{ \texttt{souradeep@labs.iisertirupati.ac.in}}

\maketitle

\section{Introduction}\label{sec1}

Let $X$ be a normal projective variety over the complex numbers. Denote by $N_{k}(X)$ the 
numerical group of dimension $k$-cycles with $\mathbb{R}$-coefficients. The pseudoeffective cone 
$\overline{\textrm{Eff}}_{k}(X)$ is defined as the closure in $N_{k}(X)$ of the cone generated by classes 
of $ k$-dimensional subvarieties of $X$. When $X$ is smooth, the nef cone $\textrm{Nef}^{\,k}(X)$ is defined as the dual of $\overline{\textrm{Eff}}_{k}(X)$
with respect to the intersection pairing. When $X$ is singular, we instead work with the space of dual numerical classes $N^{k}(X)$, which is the abstract dual of $N_{k}(X)$.

It is well known that proper pushforward maps between projective varieties descend naturally to numerical 
groups. Fulger
and Lehmann \cite{FL} showed that, for a dominant morphism between projective varieties,
the induced pushforward maps the pseudoeffective cone surjectively
onto the corresponding pseudoeffective cone of the target. Regarding pullbacks, Dang \cite{Da} proved that the flat pullback for flat morphisms between normal projective varieties descends to numerical groups.

Quotients by group actions play an important role in algebraic geometry, particularly in the classification of geometric objects. Many naturally occurring moduli and classifying spaces arise by identifying points under group actions. Geometric Invariant Theory (GIT) provides the fundamental framework for constructing well-behaved quotients of varieties or schemes under the action of algebraic groups. Among the most classical and fundamental instances of such constructions are quotients of quasi-projective varieties by finite group actions, which serve as an important source of examples and motivate several questions concerning the geometry and positivity properties of the resulting quotient varieties.

In this article, we study the numerical groups and pseudoeffective cones of finite group quotients. Let $G$ be a finite group acting on a normal projective variety $X$, and let $\pi\,:\, X\,\longrightarrow\, X/G$ be the quotient map. In Section $3$, we work under the assumption that $\pi$ is flat. This ensures that the 
pullback maps between the numerical groups are defined, as showed by \cite{Da}. We then show that the special pullback map $\pi_{\textrm{sp}}^{\ast}\,:\, Z_{\ast}(X/G)\,\longrightarrow\,Z_{\ast}(X)$, defined 
in \cite[Example 1.7.6]{Fu}, agrees with the flat pullback. 

Our main results etablish natural isomorphisms between the numerical groups of $X/G$ and the $G$-invariant numerical groups of $X$; see Theorem \ref{lower} and Corollary \ref{invariant}. These isomorphisms identify the pseudoeffective cone 
$\overline{\textrm{Eff}}_{k}$, the  pseudoeffective dual cone
$\overline{\textrm{Eff}}^{\,k}$,  and the nef cone 
$\textrm{Nef}^{\,k}$ of $X/G$ with the corresponding $G$-invariant cones on $X$; 
see Theorems \ref{pullback}, \ref{pseu}, and Proposition \ref{nef}.

As an application, we show that the equality between
$\textrm{Nef}^{\,k}(X)^{G}\,=\, \overline{\textrm{Eff}}^{\,k}(X)^{G}$ holds if and only if the corresponding 
equality $\textrm{Nef}^{\,k}(X/G)\,=\, \overline{\textrm{Eff}}^{\,k}(X/G)$ holds for the quotient; see
Proposition \ref{equality}.

\section{Preliminaries}

This section recalls the numerical group of dimension $k$-cycles and its closed subcones (see \cite{FL} 
for more details).

\subsection{Chow groups and Numerical groups}

Throughout this subsection, $X$ is a complex projective variety of dimension $n$.

\subsubsection{Cycles and Chow groups}

Let $Z_{k}(X)$ denote the group of $k$--cycles on $X$ with coefficients in $\mathbb R$. Any subscheme $Z\,
\subset\, 
X$ of dimension $k$ has a fundamental cycle denoted by $[Z]\,\in\,{Z}_{k}(X)$ defined as in \cite[\S~1.5]{Fu}. 
To study the geometry of cycles on $X$, various equivalence relations have been 
introduced on $Z_{k}(X)$. One example is rational equivalence; see \cite[\S~1.3 and \S~1.6]{Fu}.
The Chow group ${\rm CH}_{k}(X)$ is the quotient of ${Z}_{k}(X)$ modulo rational equivalence, which may
still have infinite rank.

Let $E$ be a vector bundle on $X$. For  nonnegative integers $k$ and $m$, the $k$--th Chern class of $E$ defines a linear map
$${\rm CH}_{m}(X)\ \, \xrightarrow{\,\,\,c_{k}(E)\,\cap_{-}\,\,\,}\ \,{\rm CH}_{m-k}(X)$$ \cite[\S~3.2]{Fu}.
For notational convenience, we denote $c_{k}(E)\,\bigcap\, [X]$ simply by  $c_{k}(E)$.  Furthermore, there is a natural homomorphism
\[
\textrm{Pic}(X)
\,\longrightarrow\,
{\rm CH}_{n-1}(X)_{\mathbb Z},
\]
defined by
\[
\mathcal L\,\longmapsto\, c_{1}(\mathcal L)\,\cap\, [X].
\]
This homomorphism is injective when \(X\) is normal, and it is an
isomorphism when \(X\) is locally factorial, that is, when all the
local rings of \(X\) are unique factorization domains.

\medskip
\noindent\textbf{Smooth case.}
When \(X\) is smooth, we set
\[
{\rm CH}^{k}(X)\,:=\,{\rm CH}_{n-k}(X).
\]
The intersection product equips
\[
{\rm CH}^{\ast}(X)
\,:=\,
\bigoplus_{k\,\geq\, 0}{\rm CH}^{k}(X)
\]
with the structure of a graded ring; see \cite[\S~8]{Fu}.

\subsubsection{Chow groups of a finite group quotient}

Let $G$ be a finite group acting on a complex projective variety $X$, and let $Y\,=\,X/G$ be the quotient variety. Denote by $\pi\,:\,X\,\longrightarrow\,Y$ the finite quotient map.
Let ${\rm CH}_{\ast}(X)^{G}$ denote the  $G$-invariant subgroup of ${\rm CH}_{\ast}(X)$. Then there is a canonical isomorphism of groups ${\rm CH}_{\ast}(Y)\, =\, {\rm CH}_{\ast}(X)^{G}$.
For any subvariety $W$ of $X$, let 
\begin{equation*}
I_{W}\ :=\ \{g\,\in\,G\,\,\big\vert\,\, g_{|_W}\,=\, id_{W}\}    
\end{equation*}
be the inertia group, and set 
\begin{equation*}
e_{W}\,:=\, \textrm{card}(I_{W})/\textrm{deg}_{i}(W/V),    
\end{equation*}
where $V\,=\, \pi(W)$, and ${\rm deg}_{i}(W/V)$ is the degree of inseparability of $K(W)$ over $K(V)$, the field of rational functions on the varieties $W$ and $V$, respectively. We recall from \cite[Example 1.7.6]{Fu} that, for a subvariety $V$ of $Y$, one defines
\begin{equation}\label{special}
\pi_{\textrm{sp}}^{\ast}[V]\,=\, \sum\, e_{W} [W],    
\end{equation}
where the sum is taken over all irreducible components $W$ of $\pi^{-1}(V)$. This determines an isomorphism $Z_{\ast}(Y)\,=\, Z_{\ast}(X)^{G}$, and ${\rm CH}_{\ast}(X)^{G}$ is the quotient of $Z_{\ast}(X)^{G}$ modulo the subspace generated by
\begin{equation*}
\left\{\sum_{g\,\in\,G}g_{\ast}[\textrm{div}(r)]\,\,\big\vert\,\,r\,
\in\,K(W)^{\ast},\,\,\, W\,\subset\,X\right\}.    
\end{equation*}
Note that the composition 

\begin{equation}\label{comp}
{\rm CH}_{\ast}(Y)\,\xlongrightarrow{\,\,\pi_{\textrm{sp}}^{\ast}\,\,}\, {\rm CH}_{\ast}(X)^{G}\,
\hookrightarrow\,{\rm CH}_{\ast}(X)\,\xlongrightarrow{\,\,\pi_{\ast}\,\,}\, {\rm CH}_{\ast}(Y)     
\end{equation}
is multiplication by card($G$).

\medskip
\noindent\textbf{Smooth case.} If $X$ is a smooth projective variety then ${\rm CH}_{\ast}(Y)$ may also be made into a ring. Indeed, ${\rm CH}_{\ast}(Y)$ is the ring of $G$-invariants of ${\rm CH}_{\ast}(X)$. More precisely, if $V,W$ are subvarieties of $Y$, one may construct a refined intersection class $V\cdot W$ in ${\rm CH}_{\ast}(V\,\cap\,W)$, $m\,=\, \textrm{dim}, V\,+\, \textrm{dim}\,W\,-\, \textrm{dim}\,Y$, defined as 
\begin{equation*}
V\cdot W \ =\ (1/|G|)\eta_{\ast}(\pi_{\textrm{sp}}^{\ast}[V]\cdot \pi_{\textrm{sp}}^{\ast}[W]),   
\end{equation*}
where $\eta$ is the projection from $\pi^{-1}(V\,\cap\,W)$ to $V\,\cap\, W$. Note that the product on $Y$ is determined so that $\pi_{\textrm{sp}}^{\ast}(a\cdot b)\,=\, \pi_{\textrm{sp}}^{\ast}(a)\cdot \pi_{\textrm{sp}}^{\ast}(b)$,\,\, \,\textrm{and}\,\,\,$\pi_{\ast}(\pi_{\textrm{sp}}^{\ast}(a)\cdot c)\,=\, a\cdot \pi_{\ast}(c)$, for cycles $a,b$ on $Y$, $c$ on $X$, see \cite[Example 8.3.12]{Fu}.

%By \cite[17.4.10]{Fu}, the canonical homomorphism
%\begin{equation*}
%{\rm CH}^{\ast}(Y)\ \xlongrightarrow{\,\,\cap\,[Y]\,\,}\ {\rm CH}_{\ast}(Y)    
%\end{equation*}
%is an isomorphism of rings. This shows in particular that the ring structure on ${\rm CH}_{\ast}(Y)$
%is independent of $X$, and constructs pull-back homomorphism for arbitrary morphisms of such varieties. 

\subsubsection{Numerical equivalence}
\textbf{Singular case.}\ When $X$ is singular, we do not have an intersection pairing. Instead,
the Chern class action can be used. Following \cite[\S~19]{Fu}, we say that a $k$--cycle $Z$ is
\textit{numerically trivial}, denoted by $Z\,\equiv\, 0$, if 
\begin{equation*}
\textrm{deg}(P\cap [Z]_{Chow})\ =\ 0
\end{equation*}
for any weight $k$ homogeneous polynomial $P$ in the Chern classes of vector bundles on $X$; a Chern polynomial is
naturally seen as an operator on Chow groups using the linearity and commutativity of the action of
the Chern classes. The \textit{numerical group} of $k$--cycles is defined by $$N_{k}(X)\ =\ {\rm CH}_{k}(X)/\equiv .$$
It is a finite dimensional real vector space and is nonzero only for $0\,\leq\, k\,\leq \,\dim\, X\,=\, n$. The class in ${N}_{k}(X)$ of a real $k$--cycle $Z$ is denoted
by $[Z]$. Clearly, both $N_{0}(X)$ and $N_{n}(X)$ are isomorphic to $\mathbb{R}$.

The \textit{dual numerical group} $N^{k}(X)\,:=\, (N_{k}(X))^{\vee}$ is no longer isomorphic to
$N_{n-k}(X)$. It can also be defined as follows:
\begin{equation}\label{upper numerical groups}
{N}^{k}(X)\ = \ \frac{\text{Homogeneous Chern }\, \mathbb{R}\text{--polynomials $P$ of
weight }\, k}{\text{Chern polynomials }\, P\, \text{ such that }\, P\cap
\alpha \,=\,0\, \text{ for all }\, \alpha\,\in\, N_{k}(X)}.
\end{equation}
The multiplication of Chern polynomials induces a graded ring structure on $N^{\ast}(X)$. Moreover, the action of Chern classes induces linear maps
\[
N^{k}(X)\times N_m(X)
\longrightarrow
N_{m-k}(X),
\qquad
(P,\alpha)\longmapsto P\cap\alpha.
\]
There is a natural \textit{cyclification map}
\begin{equation}\label{isomorphism}
N^{k}(X) \ \longrightarrow \ N_{n-k}(X),
\end{equation}
defined by $P\,\longmapsto\,
P\,\cap\,[X]$, which is not in general an isomorphism. For $k\,=\,1$, the cyclification map $N^{1}(X) \, \longrightarrow \,
N_{n-1}(X)$ is injective; see
\cite[Example 19.3.3]{Fu}. Dually, $N^{n-1}(X) \, \longrightarrow \, N_{1}(X)$ is 
onto. More generally, $N_{\ast}(X)$ is a module over $N^{\ast}(X)$.

For notational simplicity, we will denote
\begin{equation}\label{P}
P \cap [X]\,\, \in\,\, N_k(X) \quad \text{by} \quad P.
\end{equation}

The space ${N}^{1}(X)$ is the N\'eron-Severi group of real Cartier divisors modulo numerical equivalence. Thus, ${N}_{1}(X)$ is the space of curves with
$\R$--coefficients modulo classes that have vanishing intersections against first Chern class
of invertible sheaves. The formal dual ${N}^{1}(X)$ is then the real space of the first Chern class of 
invertible sheaves modulo those having vanishing intersection against every curve.

\noindent\textbf{Smooth case.}\ When $X$ is smooth, there is an intersection pairing
\begin{equation}\label{e1}
{\rm CH}_{k}(X)\,\times\, {\rm CH}^{k}(X) \ \longrightarrow \ \mathbb{R}
\end{equation}
determined by the ring structure on ${\rm CH}^{\ast}(X)$ and the natural point counting degree
function 
$${\rm deg}\, :\, {\rm CH}_{0}(X)\, \longrightarrow \, \mathbb{R}.$$ 
The \textit{numerical group} $N_{k}(X)$ is the quotient 
of ${\rm CH}_{k}(X)$ by the kernel of this pairing; we denote $N^{k}(X) \,:=\, N_{n-k}(X)$. The pairing in \eqref{e1} 
induces a perfect pairing $N_{k}(X)\,\times\, N^{k}(X)\, \longrightarrow \, \mathbb{R}$, in particular, 
we have $N^{k}(X)\,\cong\, (N_{n-k}(X))^{\vee}$.

\begin{remark}
The definition of numerical equivalence for singular projective varieties given above agrees with the classical definition when $X$ is smooth. More precisely, if $X$ is smooth, then a cycle
$Z\,\in\, Z_{k}(X)$ is numerically trivial if and only if $[Z]\cdot\beta\, =\, 0$ for
all $\beta\,\in\, {N}_{n-k}(X)$, where the pairing is induced by the intersection product on the Chow ring; see $\emph{\cite[Example 19.1.5(a)]{Fu}}$.
\end{remark}

\subsection{Positive cones}
Throughout this subsection, $X$ is an arbitrary complex projective variety, not necessarily smooth.
\subsubsection{The pseudoeffective cone}
We say that a class $\alpha\,\in\, {N}_{k}(X)$ is \textit{effective} if $\alpha\, =\, [Z]$ for some effective $k$--cycle $Z$. Effective classes are closed under positive linear combinations, which motivates the following definition.

\begin{definition}
The closure of the convex cone generated by effective $k$--cycles on $X$ in ${N}_{k}(X)$ is denoted 
$\overline{\emph{Eff}}_{k}(X)$ and is called the \textit{pseudoeffective} cone. A class $\alpha\,\in\, 
{N}_{k}(X)$ is called \textit{pseudoeffective} (respectively, \textit{big}) if it lies in 
${\overline{\emph{Eff}}}_{k}(X)$ (respectively, in the interior of ${\overline{\emph{Eff}}}_{k}(X))$.
\end{definition}

Note that $\overline{\textrm{Eff}}_{1}(X)$ is referred to as the \textit{closed cone of curves} or the \textit{Mori cone of curves}, and is often
denoted by $\overline{\textrm{NE}}(X)$ in the literature.

\begin{definition}
A class $\beta\,\in \,{N}^{k}(X)$ is said to be \textit{pseudoeffective} if $\phi(\beta)\,\in\,
\overline{\textrm{Eff}}_{n-k}(X)$, where $\phi$ is the cyclification map defined in \eqref{isomorphism}. The resulting closed cone in $N^{k}(X)$ is denoted by $\overline{\emph{Eff}}^{\,k}(X)$ and is called the \textit{pseudoeffective dual cone}.
\end{definition}

\subsubsection{The nef cone}
Recall the perfect pairing $N^{k}(X)\times N_{k}(X)\,\longrightarrow \,\mathbb{R}$, defined by $(P,\,
\alpha)\,\longmapsto\, P\,\cap\,\alpha$.
\begin{definition}
 The \textit{nef cone} $\emph{Nef}^{\,\,k}(X)\,\subset\, N^{k}(X)$ is the dual of $\overline{\textrm{Eff}}_{k}(X)\,\subset\, N_{k}(X)$, with respect to the above perfect pairing.
\end{definition}
By definition, nefness is preserved under proper pullbacks. For simplicity, we will denote $\textrm{Nef}^{1}(X)$ by $\textrm{Nef}(X)$.

\section{Numerical groups and positive cones of quotients}

We begin by recalling some standard results related to finite group action on vector spaces.

For the following two results, \(\Bbbk\) denotes a field of characteristic $0$, unless otherwise specified.

\begin{lemma}\label{splits}
Let $G$ be a finite group, and let $U$, $V$, and $W$ be \(\Bbbk\)--vector spaces equipped with an action of $G$. Suppose
$$
0\, \longrightarrow\, U\, \longrightarrow\, V\, \xlongrightarrow{\pi}\, W \, \longrightarrow\, 0
$$
is a short exact sequence of $G$--modules over \(\Bbbk\). Then the sequence splits
as $G$--modules. Equivalently, there exists a $G$--equivariant map $\widetilde{s} \,:\, W\, \longrightarrow\, V$ such that $\pi\, \circ\, \widetilde{s} \,=\, \operatorname{Id}_W.$ Hence $V \,\cong\, U\, \oplus\, W$ as $G$--modules.
\end{lemma}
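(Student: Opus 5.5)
This is Maschke's theorem, and the plan is the standard averaging argument; division by $|G|$ is possible because $\operatorname{char}\Bbbk = 0$. First, since $\pi\colon V\to W$ is a surjective $\Bbbk$-linear map, I choose any $\Bbbk$-linear section $s\colon W\to V$ with $\pi\circ s=\operatorname{Id}_W$. For example, pick a basis of $W$ and lift each basis vector; no equivariance is required at this stage, and no finite-dimensionality is needed because every vector space has a basis.

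Next I average $s$ over $G$:
\[
\widetilde{s}\;:=\;\frac{1}{|G|}\sum_{g\in G} g\circ s\circ g^{-1}\colon W\longrightarrow V .
\]
This is well defined because $|G|$ is invertible in $\Bbbk$. I then check two things.
\begin{itemize}
\item \emph{$\widetilde{s}$ is a section.} Since $\pi$ is $G$-equivariant, $\pi\circ g\circ s\circ g^{-1}=g\circ\pi\circ s\circ g^{-1}=g\circ g^{-1}=\operatorname{Id}_W$ for each $g$. Averaging $|G|$ copies of the identity gives $\pi\circ\widetilde{s}=\operatorname{Id}_W$.
\item \emph{$\widetilde{s}$ is $G$-equivariant.} For $h\in G$, reindexing the sum by $g\mapsto hg$ gives $h\circ\widetilde{s}\circ h^{-1}=\widetilde{s}$.
\end{itemize}

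Finally, I deduce the splitting $V\cong U\oplus W$. The map $U\oplus W\to V$, $(u,w)\mapsto u+\widetilde{s}(w)$, is $G$-equivariant because both the inclusion of $U$ and $\widetilde{s}$ are. It is injective: if $u+\widetilde{s}(w)=0$, applying $\pi$ gives $w=0$, and then $u=0$. It is surjective: any $v\in V$ can be written as $v=(v-\widetilde{s}\pi(v))+\widetilde{s}\pi(v)$, and the first term lies in $\ker\pi=U$. So it is an isomorphism of $G$-modules.

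There is no serious obstacle. The only points needing care are the equivariance computation, which uses the reindexing of the sum, and making explicit that $\operatorname{char}\Bbbk=0$ is exactly what allows the factor $1/|G|$.
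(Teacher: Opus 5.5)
Your proposal is correct and is essentially the paper's proof: the paper also picks an arbitrary $\Bbbk$-linear section $s$, averages it as $\widetilde{s}(w)=\frac{1}{|G|}\sum_{g\in G} g\cdot s(g^{-1}w)$, and checks equivariance by the same reindexing. Your version is slightly more complete. You state explicitly that $\pi\circ\widetilde{s}=\operatorname{Id}_W$ relies on the $G$-equivariance of $\pi$, and you verify the isomorphism $U\oplus W\to V$ directly, whereas the paper treats both points as clear.
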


\begin{proof}
Since $W$ is a vector space over \(\Bbbk\), there exists a \(\Bbbk\)--linear splitting 
$s\, :\, W\, \longrightarrow\, V
$
such that
$\pi\, \circ\, s\, =\, \operatorname{Id}_W.
$
The map $s$ need not be $G$-equivariant. Define a new map
\[
\widetilde{s}\, :\, W \,\longrightarrow\, V
\]
by
\[
\widetilde{s}(w)
\,=\,
\frac{1}{|G|}
\sum_{g \in G} g \cdot s(g^{-1}w).
\]
Since $\operatorname{char}(\Bbbk)\,=\,0$, the scalar $\frac{1}{|G|}$ is
well-defined in \(\Bbbk\). We first show that $\widetilde{s}$ is $G$--equivariant. Let $h \,\in\, G$. Then
\[
\widetilde{s}(hw)
\,=\,
\frac{1}{|G|}
\sum_{g \,\in\, G} g \cdot s(g^{-1}hw).
\]
Putting $g \,=\, hk$, we obtain
\[
\widetilde{s}(hw)
\,=\,
\frac{1}{|G|}
\sum_{k \,\in\, G} hk \cdot s(k^{-1}w)
\,=\,
h \cdot
\left(
\frac{1}{|G|}
\sum_{k\, \in\, G} k \cdot s(k^{-1}w)
\right).
\]
Hence
\[
\widetilde{s}(hw)\,=\,h\cdot \widetilde{s}(w).
\]
Therefore $\widetilde{s}$ is $G$--equivariant. Since $\pi\, \circ\, s \,=\, \operatorname{id}_W$, this clearly implies that $\pi\,\circ\,\widetilde{s} \,=\, \operatorname{Id}_W$. Hence $\widetilde{s}$ is a $G$--equivariant splitting of the sequence, and so
$
V\, \cong\, U\, \oplus\, W
$
as $G$--modules.
\end{proof}

\begin{remark}
Let $\operatorname{char}(\Bbbk)\,>\,0$. Note that the above lemma remains valid whenever $\operatorname{char}(\Bbbk)\nmid |G|$.
\end{remark}

\begin{corollary}\label{canonical}
Let $V$ be a \(\Bbbk\)--vector space, and let $G$ be a finite group acting linearly on $V$. Let $V^{G}$
denote the subspace of $V$ consisting of elements that are invariant under the action of $G$.
\begin{enumerate}
\item[$(i)$] Suppose $W$ is a $G$--equivariant subspace of $V$. Then $W^{G}\,=\, V^{G}\,\cap\,W$ and there is a natural isomorphism
$({V}/{W})^{G}\,\cong\, {V^{G}}/{W^{G}}$.
\item[$(ii)$]  The inclusion map  $i\,:\,V^{G}\,\hookrightarrow\, V$ induces an isomorphism from $(V^{\vee})^{G}$ to $(V^{G})^{\vee}$.
\end{enumerate}
\end{corollary}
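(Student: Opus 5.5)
The approach is to use the averaging (Reynolds) operator $\rho\,:\,V\,\longrightarrow\, V$, $\rho(v)\,=\,\frac{1}{|G|}\sum_{g\in G} g\cdot v$, which is well defined because $\operatorname{char}(\Bbbk)\,=\,0$. It is a $G$--equivariant projection of $V$ onto $V^{G}$, and it preserves every $G$--stable subspace. Alternatively, for $(i)$ one could invoke Lemma \ref{splits} directly.

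For $(i)$, the equality $W^{G}\,=\,V^{G}\cap W$ holds by definition, since both sides consist of the vectors of $W$ fixed by every $g\in G$. Let $q\,:\,V\,\longrightarrow\, V/W$ be the quotient map, which is $G$--equivariant because $W$ is $G$--stable. It sends $V^{G}$ into $(V/W)^{G}$, and its kernel on $V^{G}$ is $V^{G}\cap W\,=\,W^{G}$. This gives an injection $V^{G}/W^{G}\,\hookrightarrow\,(V/W)^{G}$. The only real step is surjectivity. Take $\bar v\in (V/W)^{G}$ and a lift $v$. Then $q(\rho(v))\,=\,\frac{1}{|G|}\sum_g g\cdot\bar v\,=\,\bar v$, and $\rho(v)\in V^{G}$, so $\bar v$ is in the image. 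Equivalently, Lemma \ref{splits} applied to $0\to W\to V\to V/W\to 0$ yields an equivariant section $\widetilde s$, and $\widetilde s$ carries invariants to invariants. Naturality holds because the map is induced by $q$.

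For $(ii)$, the group $G$ acts on $V^{\vee}$ by $(g\cdot f)(v)\,=\,f(g^{-1}v)$. Restriction along $i$ gives a linear map $r\,:\,(V^{\vee})^{G}\,\longrightarrow\,(V^{G})^{\vee}$, $f\mapsto f\circ i$.
\begin{itemize}
\item \emph{Injectivity.} For an invariant $f$ we have $f(v)\,=\,\frac{1}{|G|}\sum_g f(g^{-1}v)\,=\,f(\rho(v))$. So if $f$ vanishes on $V^{G}$, it vanishes on all of $V$.
\item \emph{Surjectivity.} Given $\lambda\in (V^{G})^{\vee}$, set $f\,=\,\lambda\circ\rho$. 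This $f$ is $G$--invariant because $\rho(g^{-1}v)\,=\,\rho(v)$, and it restricts to $\lambda$ because $\rho$ is the identity on $V^{G}$.
\end{itemize}
Note that the argument does not need $V$ to be finite dimensional.

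The main, and only mild, obstacle is the surjectivity in each part. It is handled uniformly by the averaging operator, and this is exactly where the hypothesis $\operatorname{char}(\Bbbk)\nmid|G|$ is used.
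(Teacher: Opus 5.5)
Your proof is correct and uses the same mechanism as the paper's, namely averaging over $G$. The paper routes this through the equivariant splitting of Lemma~\ref{splits}: for $(i)$ it uses exactness of $(-)^{G}$, and for $(ii)$ it decomposes $V\cong V^{G}\oplus W$ and checks $(W^{\vee})^{G}=0$. You use the Reynolds projector $\rho$ directly, which is the projection onto $V^{G}$ along the canonical complement $\ker\rho$. This gives an explicit inverse $\lambda\mapsto\lambda\circ\rho$, and it avoids the claim $W^{G}=0$, which the paper states without justification (it follows from $(i)$).
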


\begin{proof}
From Lemma \(\ref{splits}\), it follows that \((-)^{G}\) is an exact functor. Hence, the proof of \((i)\) follows. 

For the proof of \((ii)\), consider the inclusion
$i\,:\,V^{G}\,\hookrightarrow\, V.$ By Lemma \ref{splits}, the following short exact sequence of $G$ vector spaces splits:
\begin{equation*}
0\,\longrightarrow\, V^{G}\,\longrightarrow\, V\,\longrightarrow\, V/V^{G}\,\longrightarrow\,0.
\end{equation*}

There exists a $G$--subspace $W$ of $V$ such that we have $V\,\cong\, V^{G}\,\oplus\,W$ as $G$ vector spaces. After taking the dual, we obtain
\begin{equation*}
V^{\vee}\,\cong\, (V^{G})^{\vee}\,\oplus\, W^{\vee}.
\end{equation*}
Since \(G\) acts trivially on \(V^{G}\), it also acts trivially on \((V^{G})^{\vee}\). Therefore,
\begin{equation*}
((V^{G})^{\vee})^{G}\,=\,(V^{G})^{\vee}.
\end{equation*}
On the other hand,
\begin{equation*}
(V^{\vee})^{G}
\,=\,
((V^{G})^{\vee}\,\oplus\, W^{\vee})^{G}
\,=\,
(V^{G})^{\vee}\,\oplus\,(W^{\vee})^{G}.
\end{equation*}

To show the claim it remains to show that
\begin{equation*}
(W^{\vee})^{G}\,=\,0.
\end{equation*}
Let $f\,\in\,(W^{\vee})^{G}$. Then $g\cdot f\,=\,f$ for all $g\,\in\,G$. Assume that \(f\neq 0\). Then there exists \(w\in W\) such that $f(w)\,\neq\,0$. Consider the  vector $w_{0}\,:=\,\sum_{g\in G}gw$. Then clearly $w_{0}\,\in\,W^{G}$. Clearly, \(W^{G}=0\), it follows that $w_{0}\,=\,0$. Since $\textrm{char}(\Bbbk)\,=\,0$ and $|G|\,\neq\,0$, we conclude $f(w)\,=\,0$. Hence $f\,=\,0$, and therefore $(W^{\vee})^{G}\,=\,0$.
\end{proof}

The following result of Dang asserts that a flat pullback map on the Chow groups descends to a map 
between the numerical groups and, dually, the pushforward to numerical dual groups.

\begin{proposition}[{\cite[Corollary 9.5]{Da}}]\label{da}
Let $q:\, X\,\longrightarrow\,Y$ be a flat morphism of relative dimension $e$ between normal projective
varieties. Then the morphism $q^{\ast}:\, {\rm CH}_{\ast}(Y)\,\longrightarrow\,{\rm CH}_{e+\ast}(X)$ induces a
morphism of abelian groups $q^{\ast}\,:\, N_{\ast}(Y)\,\longrightarrow\, N_{e+\ast}(X)$. By
duality, the morphism $q_{\ast}\,:\, {\rm CH}^{\ast}(X)\,\longrightarrow\,{\rm CH}^{\ast-e}(Y)$ induces a morphism of
abelian groups $q_{\ast}\,:\, N^{\ast}(X)\,\longrightarrow\, N^{\ast-e}(Y)$.
\end{proposition}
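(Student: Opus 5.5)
The claim has two parts. The first is that flat pullback on Chow groups descends to numerical groups. The second, dual, part is that pushforward on dual numerical classes is well defined. I would deduce both from the projection formula for Chern classes together with the fact that flat pullback commutes with capping by Chern classes.

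\textbf{Step 1.} Fix $\alpha\in {\rm CH}_k(Y)$ with $\alpha\equiv 0$. Take a weight $(k+e)$ Chern polynomial $P$ in vector bundles $E_1,\dots,E_r$ on $X$. The goal is to show $\deg(P\cap q^\ast\alpha)=0$.

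If every $E_i$ were pulled back from $Y$, this would be immediate. Indeed, \cite[Prop.~3.2, Thm.~3.2(d)]{Fu} gives $P(q^\ast F)\cap q^\ast\alpha=q^\ast(P(F)\cap\alpha)$. Here $P(F)\cap \alpha$ has dimension $-e$, which is negative unless $e=0$, so it vanishes; in the case $e=0$ its degree is $0$ because $\alpha\equiv 0$.

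For general bundles on $X$, I would instead push forward to $Y$:
\[
\deg(P\cap q^\ast\alpha)\;=\;\deg q_\ast(P\cap q^\ast\alpha).
\]
The key input is that, for flat projective $q$ between normal projective varieties, the operator $\beta\mapsto q_\ast(P\cap q^\ast\beta)$ on ${\rm CH}_\ast(Y)$ agrees numerically with capping by some Chern polynomial on $Y$. I would produce this class via Grothendieck--Riemann--Roch / Fulton's pushforward of Chern classes. Concretely, one writes $P$ as a polynomial in Chern characters and uses $q_\ast(\mathrm{ch}(E)\cdot \mathrm{td}(T_q)\cap q^\ast\beta)=\mathrm{ch}(q_!E)\cap\beta$. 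Flatness guarantees that $q_!E$ is a perfect complex on $Y$, whose Chern classes are defined, and that $T_q$ is a virtual bundle.

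Once this is in place, numerical triviality of $\alpha$ forces the degree to vanish. This is essentially Dang's argument in \cite{Da}, which one may cite. Since in our application $q=\pi$ is finite flat with $e=0$, the required identity is simpler: the pushforward $\pi_\ast$ of vector bundles gives bundles $\pi_\ast E$ on $Y$, and the norm/transfer formula expresses the relevant Chern classes.

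\textbf{Step 2 (dual part).} This is formal linear algebra. Step 1 yields a linear map $q^\ast:N_{\ast}(Y)\to N_{e+\ast}(X)$. Taking transposes, and using $N^{k}=(N_k)^\vee$, gives
\[
q_\ast:N^{\ast+e}(X)\to N^{\ast}(Y),\qquad \langle q_\ast P,\beta\rangle=\langle P,q^\ast\beta\rangle .
\]
I would also check compatibility with the Chow-level $q_\ast$ on Chern operators; this follows from the definitions.

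\textbf{Main obstacle.} The hard step is producing, for an arbitrary bundle on $X$, a Chern-polynomial operator on $Y$ that computes $q_\ast(P\cap q^\ast -)$ in the singular setting. There is no intersection product on $Y$, so GRR must be used in its operational/bivariant form, with enough care to stay with Chern classes of honest bundles (or perfect complexes) on $Y$. I would first try the bivariant-class approach of \cite[\S17]{Fu}, where flat $q$ carries an orientation class $[q]$, and express $q_\ast(P\cdot[q])$ numerically via Chern classes.
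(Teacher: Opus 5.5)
The paper does not prove this proposition; it quotes it from \cite[Corollary 9.5]{Da}, so citing Dang is exactly what the authors do. Your own argument for Step~1, however, has a genuine gap. The only case you actually settle is when every $E_i$ is pulled back from $Y$. That case is correct but trivial: in the paper's setting it misses, for instance, every line bundle on $X$ whose class is not $G$-invariant. For general bundles you rely on $q_*(\mathrm{ch}(E)\,\mathrm{td}(T_q)\cap q^*\beta)=\mathrm{ch}(q_!E)\cap\beta$, justified by ``flatness guarantees that $T_q$ is a virtual bundle''. That justification is false. A virtual relative tangent bundle exists for local complete intersection morphisms, and flat morphisms need not be lci. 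This fails already for the maps this paper is about. Take $G=\mu_3$ acting on $X=\mathbb{P}(1,1,3)$ by $[x:y:w]\mapsto[\zeta x:y:w]$, with quotient map $[x:y:w]\mapsto[x^3:y^3:w]$ onto $\mathbb{P}^2$. This map is finite with Cohen--Macaulay source and regular target, hence flat. It is \emph{not} lci: an lci morphism to a smooth variety has Gorenstein source, and the $\frac{1}{3}(1,1)$ point of $X$ is not Gorenstein. Your finite-case fallback does not close the gap either. The norm gives $q_*(c_1(L)\cap q^*\beta)=c_1(\mathrm{Nm}_{X/Y}L)\cap\beta$ for a single line bundle. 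For products and higher Chern classes you are back to needing a Riemann--Roch formula for $q_*E$, which is the same problem.

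A second step is also missing, even where such an identity holds. It only controls the operator $\beta\mapsto q_*(\mathrm{ch}(E)\,\tau\cap\beta)$, where $\tau=\sum_{j\ge 0}\tau_j$, $\tau_0=[q]$, and $\tau_j\cap\alpha$ has dimension $k+e-j$. What you need is $\deg(P\cap q^*\alpha)$ for a homogeneous $P$ of weight $k+e$. ``Writing $P$ as a polynomial in Chern characters'' does not achieve this, because the identity is linear in the inhomogeneous $\mathrm{ch}(E)$. You must separate the leading term, for example with Adams operations. The quantity $\deg(\mathrm{ch}(\psi^m E)\,\tau\cap\alpha)=\sum_{j\ge 0} m^{k+e-j}\deg(\mathrm{ch}_{k+e-j}(E)\,\tau_j\cap\alpha)$ is a polynomial in $m$ whose top coefficient is $\deg(\mathrm{ch}_{k+e}(E)\cap q^*\alpha)$. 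Products of Chern classes are then reached through tensor products. One plausible way to complete your strategy for arbitrary flat projective $q$ is to replace $\mathrm{td}(T_q)\cdot[q]$ by a bivariant Riemann--Roch class of the $q$-perfect complex $\mathcal{O}_X$, whose leading term is $[q]$. Combine it with this separation argument and with the fact that $Rq_*$ of a bundle is represented by a complex of bundles on $Y$, so its Chern character kills numerically trivial $\alpha$. As written, though, the decisive step is deferred to your ``main obstacle'', and the route you propose for it rests on a false premise. Step~2 is fine once Step~1 is established.
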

We first show that the special pullback is compatible with the usual pullback of Cartier divisors.
\begin{proposition}\label{F}
Let $X$ be a normal complex projective variety of dimension $n$, and let $G$ be a finite group acting on $X$. Let $\pi\,:\,X\,\longrightarrow\, X/G$ be the quotient map. Let $D$ be an integral Cartier divisor on $X/G$, and let $[D] \,\in\, {{Z}}_{n-1}(X/G)$ be its associated Weil divisor cycle. Then
\[
\pi^{\ast}_{\emph{sp}}([D]) \ =\ [\,\pi^\ast D\,],
\]
i.e., the Weil divisor associated with the Cartier divisor $\pi^\ast D$
coincides with the special pullback of the Weil divisor associated with $D$.
\end{proposition}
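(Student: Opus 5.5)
Both sides of the identity are Weil divisors on $X$, so it suffices to compare multiplicities along each prime divisor $W\subset X$. The plan is a local computation at codimension one points. These are regular points of $X$ and of $X/G$ because both varieties are normal; $X/G$ is normal as the quotient of a normal variety by a finite group.

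Write $Y=X/G$ and $[D]=\sum_V \operatorname{ord}_V(D)\,[V]$, where $V$ runs over prime divisors of $Y$. By linearity of \eqref{special},
\[
\pi^{\ast}_{\textrm{sp}}[D]=\sum_V \operatorname{ord}_V(D)\sum_{W\subset \pi^{-1}(V)} e_W\,[W].
\]
On the other side, $[\pi^\ast D]=\sum_W \operatorname{ord}_W(\pi^\ast D)\,[W]$. Let $W$ be a prime divisor of $X$ and put $V=\pi(W)$, which is a prime divisor of $Y$ since $\pi$ is finite. If $f$ is a local equation of $D$ near the generic point of $V$, then $\pi^\ast f$ is a local equation of $\pi^\ast D$ near the generic point of $W$. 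Hence
\[
\operatorname{ord}_W(\pi^\ast f)=e(W/V)\cdot \operatorname{ord}_V(f),
\]
where $e(W/V)$ is the ramification index of the extension of DVRs $\mathcal O_{Y,V}\subset \mathcal O_{X,W}$: if $t$ is a uniformizer at $V$, then $\pi^\ast t=u\,s^{e(W/V)}$ with $s$ a uniformizer at $W$ and $u$ a unit. The proposition therefore reduces to the identity $e(W/V)=e_W=\operatorname{card}(I_W)/\deg_i(W/V)$. In characteristic $0$ we have $\deg_i=1$, so this becomes $e(W/V)=|I_W|$.

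The main step is this identification, which is classical ramification theory for the Galois extension $K(X)/K(X)^G=K(Y)$. I would argue as follows.
\begin{enumerate}
\item Reduce to the case where $G$ acts faithfully. The kernel $N$ of the action acts trivially on everything, so it contributes a factor $|N|$ to $|I_W|$ but nothing to the geometry. Fulton's convention in \cite[Example 1.7.6]{Fu} already accounts for this, and I would check that it does.
\item With the action faithful, $\mathcal O_{Y,V}$ is a DVR with fraction field $K(X)^G$. Its integral closure in $K(X)$ is the semilocal ring of $X$ along $\pi^{-1}(V)$, and $G$ permutes the primes $W'$ over $V$ transitively.
\item Apply the fundamental identity $\sum_{W'} e_{W'} f_{W'}=|G|$. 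Transitivity gives $r\,e\,f=|G|$, where $r=[G:D_W]$ and $D_W$ is the decomposition group.
\item Use that the residue extension $K(W)/K(V)$ is Galois, that $D_W\to \operatorname{Gal}(K(W)/K(V))$ is surjective, and that its kernel is the inertia group. This gives $|D_W|=e\cdot f$ and $|\text{inertia}|=e$.
\item Match definitions. The inertia group in the DVR sense is $\{g\in D_W : g \text{ acts trivially on } K(W)\}$, and these are exactly the $g$ with $g|_W=\mathrm{id}_W$, that is, $I_W$. Hence $e(W/V)=|I_W|$.
\end{enumerate}

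Substituting into the two expansions gives equal coefficients for every $W$, which proves the proposition. The only delicate point is the bookkeeping in step 1 for a non-faithful action; I expect Fulton's definition to absorb it, but it has to be verified.
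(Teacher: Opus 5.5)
Your argument is correct for a faithful action, and it is essentially the paper's proof. The paper's length computation $\operatorname{ord}_{E_i}(f)=\frac{1}{rd}\,\ell_{A^G}(A/(f))=\frac{b\,|G|}{rd}$ is a hands-on proof of the fundamental identity $r\,e\,f=|G|$ that you quote in step 3 (note that $\operatorname{ord}_{E_i}(f)=e\,b$). The paper's final step is your steps 4 and 5: it gets $d=|H_i|/|I_{E_i}|$ from the surjection $H_i\to\operatorname{Aut}(k(E_i)/k(D_0))$ with kernel $I_{E_i}$, and combines it with $r=|G|/|H_i|$. Working one prime divisor at a time, rather than first reducing to effective $D$, is only a cosmetic difference.

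The one thing to correct is step 1: the check you defer there comes out negative.
\begin{itemize}
\item Fulton's $e_W=\operatorname{card}(I_W)/\deg_i(W/V)$ counts the kernel $N$ of the action, but the ramification index does not.
\item Since $[K(X):K(X)^G]=|G/N|$, running your steps 3--5 for the faithful action of $G/N$ gives $e(W/V)=|I_W|/|N|$. Hence $\pi^{*}_{\mathrm{sp}}[D]=|N|\,[\pi^{*}D]$.
\item This agrees with the degree count: $\pi_{*}\pi^{*}_{\mathrm{sp}}=|G|\cdot\mathrm{Id}$, whereas $\pi_{*}[\pi^{*}D]=\deg(\pi)\,[D]=|G/N|\,[D]$.
\item For the trivial action, $\pi=\mathrm{id}$ and $\pi^{*}_{\mathrm{sp}}[D]=|G|\,[D]\neq[D]$.
\end{itemize}
So no reduction to the faithful case is possible: the proposition itself needs $G$ to act faithfully. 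The paper also assumes this tacitly when it uses $[K(A):K(A^G)]=|G|$. Under that hypothesis your step 1 is vacuous, and the rest of your proof is complete.
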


\begin{proof}
Since every Cartier divisor can be written as a difference of
effective Cartier divisors, it is enough to prove the statement when
$D$ is effective. Write
\begin{equation*}
[D]\,=\,\sum_{j=1}^{s} b_jD_j,
\end{equation*}
where the $D_j$ are distinct prime divisors on $X/G$ and $b_j\,>\,0$. Fix a component $D_0\,:=\,D_j$, and set $b\,:=\,b_j$. Let $\xi$
denote the generic point of $D_0$. Since $D$ is Cartier, there
exists an open neighbourhood $U'$ of $\xi$ on which $D$ is
principal. Choose a sufficiently small affine open neighbourhood
\begin{equation*}
U\,\subseteq\,
U'\,\setminus\,\bigcup_{\ell\neq j}D_\ell
\end{equation*}
of $\xi$. Then
\begin{equation*}
[D|_U]\,=\,b(D_0\,\cap\, U).
\end{equation*}
Moreover, since $D|_U$ is the restriction of the original Cartier
divisor $D$, there exists $f\,\in\,\mathcal O_{X/G}(U)$ such that $D|_U\,=\,Z(f).$ Replacing $X/G$ by $U$, $X$ by $\pi^{-1}(U)$, and $D$ by
$D|_U$, we may therefore assume that $X/G$ is affine and write
\begin{equation*}
[D]\,=\,bD_0,
\qquad
D\,=\,Z(f),
\end{equation*}
where $D_0$ is a prime divisor on $X/G$,
$f\,\in\,\mathcal O_{X/G}(X/G)$, and $b\,>\,0$.
Let
\begin{equation*}
\pi^{-1}(D_0)\,=\, \bigcup_{i=1}^r E_i
\end{equation*}
be the decomposition into irreducible components.  
Let $\xi$ denote the generic point of $D_0$, and set
\begin{equation*}
A\, :=\, \mathcal{O}(X)_{\xi}, 
\end{equation*}
so $A^G\, =\, \mathcal{O}(X/G)_{\xi}$.

Since $D_{0}$ is a prime divisor and $X/G$ is normal , $A^{G}$ is a DVR. Moreover, $A$ is a semilocal ring whose maximal ideals $\xi_{1},..., \xi_{r}$ correspond to the divisors $E_{1},...,E_{r}$.

Since the action of $G$ permutes the divisors $E_{i}$ transitively, the residue field extensions 
\begin{equation*}
k(E_{i})/k(D_{0})    \end{equation*}
all have the same degree. Set

\begin{equation*}
 d\,:=\, [k(E_i):\, k(D_0)].  
\end{equation*}

So, for any simple $A$-module $M$,
\begin{equation*}
\ell_{A}(M)\,=\, \frac{\ell_{A^{G}}(M)}{[k(E_{i}):\,k(D_{0})]},    
\end{equation*}
hence it is true for all $A$-modules $M$ of finite length.

So,
\begin{align*}
\textrm{ord}_{E_i}(f) & =\, \ell_{A_{\xi_i}}(A_{\xi_i}/(f)) =\, \frac{1}{r}\sum_{j=1}^r \ell_{A_{\xi_j}}(A_{\xi_j}/(f)) \quad(\textrm{since $G$ acts transitively on the $\xi_j$'s}\,\ )\\
&\hspace{2.5cm} =\, \frac{1}{r}\sum_{j=1}^r \frac{1}{d}\ell_{A^G}(A_{\xi_j}/(f)) \quad(\textrm {see \cite[Lemma A.1.3]{Fu})}\\
& \hspace{2.5cm} =\, \frac{1}{rd} \sum_{j=1}^r \ell_{A^G}(A_{\xi_j}/(f)) = \, \frac{1}{rd} \ell_{A^G}(A/(f)) \quad(\textrm{since $A/(f)\,\cong\, \bigoplus_{j=1}^{r} A_{\xi_j}/(f)$})\\
&\hspace{2.5cm} =\, \frac{1}{r\,d}\, \ell_{A^G}(A^G/(f))\,[K(A):K(A^G)] \quad(\textrm{see \cite[Example A.3.3]{Fu})}\\
&\hspace{2.5cm} =\, \frac{b\,|G|}{r\,d} \quad(\textrm{since}\, [K(A):K(A^G)] \,=\, |G|),
\end{align*}
where $K(A)$ and $K(A^{G})$ denote the function fields of $A$ and $A^{G}$, respectively.

Hence,
\begin{equation}\label{ord}
[\pi^\ast D]
    \ =\ \sum_{i=1}^r \textrm{ord}_{E_i}(f)\,E_i
   \  =\ \frac{b\,|G|}{r\,d} \sum_{i=1}^r E_i.
\end{equation}

Now define
\[
H_i\ :=\ \{\, g \,\in\, G\,\,\big\vert\,\, g(E_i) \,=\, E_i \}.
\]
By \cite[Lemma~15.111.9]{SP}, the extension $k(E_i)/k(D_0)$ is normal and hence Galois. Furthermore, the
homomorphism
\[
H_i\ \longrightarrow\ \textrm{Aut}(k(E_i)/k(D_0))
\]
is surjective with kernel $I_{E_i}$.  Note that $|I_{E_1}|\, =\, \ldots\, =\, |I_{E_r}|
\, =:\, d^{\prime}$ (say). Therefore,
\begin{equation*}
 \left|\textrm{Aut}(k(E_{i})/k(D_{0}))\right|\ =\ |H_{i}|/|I_{E_{i}}|,   
\end{equation*}
and for each $i$, we have
\begin{equation}\label{r}
d\, =\, [k(E_{i}):\, k(D_{0})]\,=\, \left|\textrm{Aut}(k(E_{i})/k(D_{0}))\right|\,=\, |H_{i}|/ |I_{E_{i}}|
\,=\, |H_{i}|/d^{\prime}.
\end{equation}
Note that $r\,=\, |G|/|H_{i}|$. Now substituting \eqref{r} in \eqref{ord} it is deduced that
\begin{equation*}
[\pi^{\ast}D]\ =\ b d^{\prime}\,\sum_{i=1}^{r}E_{i}\ =\ b \sum_{i=1}^{r} |I_{E_i}|E_{i}.
\end{equation*}
Thus, we have $\pi^{\ast}_{\textrm{sp}}([D])\,=\,[\pi^{\ast}D]$.
\end{proof}

\begin{proposition}
Let $X$ be a normal complex projective variety, and let $G$ be a
finite group acting on $X$. Let $\pi\,\colon\, X\,\longrightarrow\, X/G$ be the quotient map, and assume that $\pi$ is flat. Then
$\pi^{\ast}
\,=\,
\pi^{\ast}_{\mathrm{sp}}
\,\colon\,
Z_{\ast}(X/G)\,\longrightarrow\, Z_{\ast}(X).$
\end{proposition}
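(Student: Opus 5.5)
Both sides are additive, so it suffices to compare them on $[V]$ for a single subvariety $V\subset X/G$ of dimension $k$. By definition the flat pullback is the fundamental cycle of the scheme-theoretic preimage:
\[
\pi^{\ast}[V]\ =\ [\pi^{-1}(V)]\ =\ \sum_{W} \ell_{\mathcal O_{X,W}}\bigl(\mathcal O_{\pi^{-1}(V),W}\bigr)\,[W],
\]
where $W$ runs over the irreducible components of $\pi^{-1}(V)$. The special pullback \eqref{special} has the same support, with coefficient $e_W=|I_W|$ (the inseparable degree is $1$ in characteristic $0$). The plan is therefore to compare these coefficients component by component. Since $G$ acts transitively on the components $W_1,\dots,W_r$ over $V$, it is enough to show that every multiplicity $m_i:=\ell(\mathcal O_{\pi^{-1}(V),W_i})$ equals $|I_{W_i}|$.

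\textbf{Step 1: the multiplicities sum correctly.} Localize at the generic point $\eta$ of $V$. Put $B:=\mathcal O_{X/G,\eta}$, which is a local ring with residue field $k(V)$, and $A:=(\pi_\ast\mathcal O_X)_\eta$, which is semilocal with maximal ideals corresponding to $W_1,\dots,W_r$. Because $\pi$ is flat and finite, $A$ is free over $B$ of rank $\deg\pi=|G|$. Hence $A/\mathfrak m_BA$ is a $k(V)$-vector space of dimension $|G|$, and it decomposes as $\bigoplus_i A_{\xi_i}/\mathfrak m_BA_{\xi_i}$. Exactly as in the proof of Proposition~\ref{F}, each summand $A_{\xi_i}/\mathfrak m_B A_{\xi_i}$ has $k(V)$-dimension $m_i\,[k(W_i):k(V)]$, by \cite[Lemma A.1.3]{Fu}. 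By transitivity all the $m_i$ are equal, say to $m$, and all the degrees $[k(W_i):k(V)]$ are equal, say to $d$. This gives
\[
|G| \,=\, r\,m\,d .
\]

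\textbf{Step 2: identify $rd$ with $|G|/|I_W|$.} Repeat the Galois argument from the proof of Proposition~\ref{F}. Let $H_i$ be the decomposition group of $W_i$. Then $r=|G|/|H_i|$. By \cite[Lemma~15.111.9]{SP}, $k(W_i)/k(V)$ is Galois, and $H_i\to \mathrm{Aut}(k(W_i)/k(V))$ is surjective with kernel $I_{W_i}$, so $d=|H_i|/|I_{W_i}|$. Combining these,
\[
rd \,=\, |G|/|I_{W_i}|,
\]
and Step 1 then gives $m=|I_{W_i}|=e_{W_i}$. Summing over the components yields $\pi^\ast[V]=\pi^\ast_{\rm sp}[V]$, which proves the proposition.

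\textbf{Main obstacle.} The delicate point is justifying the structure statements at the generic point of an arbitrary-codimension subvariety $V$, not just a divisor. Two things are needed there. First, the decomposition $A/\mathfrak m_BA\cong\bigoplus_i A_{\xi_i}/\mathfrak m_BA_{\xi_i}$, which holds because $A/\mathfrak m_BA$ is Artinian. Second, the surjectivity of the decomposition group onto the automorphism group of the residue field extension, which is the standard result for finite group invariants in \cite{SP} and holds for any prime of $A$ lying over $\mathfrak m_B$. Note that $B$ need not be a DVR here, but the argument never uses that property: flatness substitutes for it, since it is what makes $A$ free of rank $|G|$ over $B$.
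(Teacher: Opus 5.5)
Your argument is correct and is essentially the paper's. By transitivity both pullbacks are a constant times $\sum_i [W_i]$, and each constant $c$ is pinned down by the relation $c\cdot r\cdot d=|G|$; on the flat side this tacitly uses that $G$ acts faithfully, so that $\deg\pi=|G|$, exactly as the paper does. The difference is one of packaging. The paper obtains this relation for both sides at once by citing $\pi_\ast\pi^\ast=\pi_\ast\pi^\ast_{\mathrm{sp}}=|G|\cdot\mathrm{Id}$ from Fulton's Examples~1.7.4 and~1.7.6. You instead derive it by hand: from the freeness of $A$ of rank $|G|$ over $B$ for the flat multiplicity, and from the decomposition/inertia count of Proposition~\ref{F} for $e_W$.
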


\begin{proof}
By our assumption, $\pi\, :\, X\, \longrightarrow \,X/G$ is flat, so the flat pullback
$\pi^{\ast} \,:\, Z_{\ast}(X/G)\, \longrightarrow\, Z_{\ast}(X)$ is well-defined.
By \cite[Examples~1.7.4 and~1.7.6]{Fu}, we have
\begin{equation}\label{speq}
\pi_{\ast} \,\circ\, \pi^{\ast}
\ =\ \pi_{\ast} \,\circ\, \pi^{\ast}_{\mathrm{sp}}
\ =\ |G|\,\mathrm{Id}.
\end{equation}

Let $V$ be a subvariety of $X/G$, and let $W_{1},\,\cdots,\,W_{r}$ be the irreducible components
of $\pi^{-1}(V)$. Since $G$ acts transitively on these
irreducible components, we have, for any $i$,
\[
e \ =\ \ell_{\mathcal{O}_{X}}\!\bigl(\mathcal{O}_{W_i}\bigr).
\]
Similarly, the inertia groups of the components are conjugate to each other, and we have
\[
m \, =\, \textrm{card}(I_{W_i})
 \quad \textrm{and} \quad  [K(W_{1})\,:\,K(V)]\,=\, [K(W_{i})\,:\,K(V)]  \quad\text{for all}\,\, i.
\]

By definition
\[
\pi^{\ast}[V] \;=\; e \sum_{i=1}^{r} [W_i], \,\ \textrm{and} \,\ \pi^{\ast}_{\mathrm{sp}}[V] \;=\; m \sum_{i=1}^{r} [W_i].
\]

Hence, by \eqref{speq}, we have $e\,=\,m$. Therefore, $\pi^{\ast}[V]
\,=\,
\pi^{\ast}_{\mathrm{sp}}[V].$ Since $V$ was arbitrary, we conclude that $\pi^{\ast}
\,=\,
\pi^{\ast}_{\mathrm{sp}}.$
\end{proof}

\noindent
\textbf{Assumption:} 
From now on, throughout the rest of the article, we assume that $\pi\,:\, X\,\longrightarrow\, X/G$ is a flat map.

Note that, the hypotheses of Proposition \ref{da} are satisfied for the morphism $\pi$. Thus, we have the following maps:
\begin{equation}\label{g} 
N_{\ast}(X/G)\ \xlongrightarrow{\pi^{\ast}}\ N_{\ast}(X)\ \xlongrightarrow{\pi_{\ast}}\ N_{\ast}(X/G).
\end{equation}

\begin{theorem}\label{lower}
There is an isomorphism $N_{\ast}(X/G)\,\cong\, N_{\ast}(X)^{G}$, induced by $\pi^{\ast}$, and furthermore,
the composition of maps $\pi_{\ast}\,\circ\,\pi^{\ast}$ in \eqref{g} coincides with $|G|\cdot\emph{Id}$.
\end{theorem}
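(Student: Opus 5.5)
We need three things: that $\pi_\ast\circ\pi^\ast=|G|\cdot\mathrm{Id}$ on $N_\ast(X/G)$, that $\pi^\ast$ lands in $N_\ast(X)^G$, and that $\pi^\ast\colon N_\ast(X/G)\to N_\ast(X)^G$ is bijective.

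\emph{Step 1: the composition.} Both $\pi^\ast$ (by Proposition~\ref{da}) and $\pi_\ast$ (proper pushforward) descend to numerical groups compatibly with the quotient maps ${\rm CH}_\ast\to N_\ast$. The identity $\pi_\ast\circ\pi^\ast=|G|\,\mathrm{Id}$ already holds on cycles: this is \eqref{speq}, combined with the previous proposition identifying $\pi^\ast$ with $\pi^\ast_{\rm sp}$. Passing to quotients gives the same identity on $N_\ast(X/G)$. Consequently $\pi^\ast$ is injective on $N_\ast(X/G)$, since we work with $\mathbb R$-coefficients and $|G|$ is invertible.

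\emph{Step 2: $G$-invariance.} Each $g\in G$ acts by an automorphism of $X$. Since $g$ is proper and flat, $g_\ast$ preserves numerical equivalence, so $G$ acts on $N_\ast(X)$. Because $\pi\circ g=\pi$, we have $g_\ast\pi^\ast[V]=\pi^\ast[V]$ at the level of cycles. Indeed $\pi^\ast[V]=\pi^\ast_{\rm sp}[V]$ is a $G$-invariant cycle, by \eqref{special} and the conjugacy of inertia groups. Hence the image of $\pi^\ast$ lies in $N_\ast(X)^G$.

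\emph{Step 3: surjectivity onto invariants.} Let $\alpha\in N_\ast(X)^G$. We will show $\alpha=\pi^\ast\bigl(\tfrac1{|G|}\pi_\ast\alpha\bigr)$, which requires
\[
\pi^\ast\pi_\ast\beta=\sum_{g\in G}g_\ast\beta \quad\text{for all } \beta\in N_\ast(X).
\]
It suffices to check this on cycles. Take a subvariety $W\subset X$ with $V=\pi(W)$, let $W_1=W,\dots,W_r$ be the components of $\pi^{-1}(V)$, and let $H$ be the stabilizer of $W$. Then:
\begin{itemize}
\item $\pi_\ast[W]=[K(W):K(V)]\,[V]$;
\item $\pi^\ast[V]=|I_W|\sum_i[W_i]$;
\item $\sum_g g_\ast[W]=|H|\sum_i[W_i]$.
\end{itemize}
The formula therefore reduces to $[K(W):K(V)]\cdot|I_W|=|H|$. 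This holds because $K(W)/K(V)$ is Galois with group $H/I_W$, which is the same fact used in the proof of Proposition~\ref{F} via \cite[Lemma~15.111.9]{SP}, applied at the generic point of $W$; inseparability is trivial in characteristic $0$.

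Given this identity, for $G$-invariant $\alpha$ we get $\pi^\ast\pi_\ast\alpha=|G|\alpha$, which proves surjectivity. Alternatively, surjectivity follows from a dimension count once we note that Step 1 makes $\tfrac1{|G|}\pi_\ast$ a left inverse and the averaging identity makes $\pi^\ast$ onto the invariants.

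The main obstacle is Step 3, namely verifying the identity $\pi^\ast\pi_\ast=\sum_g g_\ast$ on cycles, and in particular the degree computation $[K(W):K(V)]\,|I_W|=|H|$. I would handle it by reducing to the generic point of $V$ and invoking the Galois description of decomposition and inertia groups. One could also reach the isomorphism through the Chow-level statement ${\rm CH}_\ast(X/G)={\rm CH}_\ast(X)^G$ together with Corollary~\ref{canonical}(i), but then one must check that $\pi^\ast$ identifies the numerically trivial cycles on $X/G$ with the $G$-invariant numerically trivial cycles on $X$. That check itself requires the projection formula $\pi_\ast\pi^\ast=|G|$ and the averaging formula above, so the direct route is cleaner.
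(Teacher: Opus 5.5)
Your proof is correct, but the surjectivity step takes a different route from the paper's.

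\textbf{How the paper gets surjectivity.} The paper does not prove an averaging formula at this stage. It uses the cycle-level isomorphism $Z_\ast(X/G)\cong Z_\ast(X)^G$ from \cite[Example 1.7.6]{Fu} together with exactness of $G$-invariants (Corollary~\ref{canonical}$(i)$). This gives $N_\ast(X)^G=Z_\ast(X)^G/(Z_\ast(X)^G\cap\mathrm{Num}(X))$. So every invariant numerical class is represented by an invariant cycle, and every invariant cycle is a pullback. Injectivity comes, as in your Step 1, from $\pi_\ast\pi^\ast=|G|\cdot\mathrm{Id}$. This route needs no field-degree computation. It only uses that $\pi^\ast[V]$ is a nonzero multiple of the orbit sum $\sum_i[W_i]$.

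\textbf{What your route buys.} You establish $\pi^\ast\pi_\ast=\sum_{g}g_\ast$ first. The paper derives this only afterwards, as a consequence of the isomorphism, in part $(ii)$ of the corollary immediately following the theorem. Your route has the advantage of exhibiting the inverse $\frac{1}{|G|}\pi_\ast$ on $N_\ast(X)^G$ directly.

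\textbf{The degree identity needs no Galois theory.} The identity $[K(W):K(V)]\,|I_W|=|H|$, which you single out as the main obstacle, follows from your own Step 1 applied to $[V]$. Indeed, $|G|\,[V]=\pi_\ast\pi^\ast[V]=|I_W|\,r\,[K(W):K(V)]\,[V]$, and $r=|G|/|H|$ by orbit--stabilizer.

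\textbf{Your closing remark misjudges the alternative.} Comparing numerically trivial cycles along the cycle-level isomorphism does not need the averaging formula. It needs only Proposition~\ref{da} and $\pi_\ast\pi^\ast=|G|\cdot\mathrm{Id}$: if $\pi^\ast z\equiv 0$, then $|G|\,z=\pi_\ast\pi^\ast z\equiv 0$. That is exactly the paper's proof, so it is not less clean than yours.
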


\begin{proof}
By \cite[Example 1.7.6]{Fu}, the composition of maps
\begin{equation*}
Z_{\ast}(X/G)\,\xlongrightarrow{\pi^{\ast}}\,Z_{\ast}(X)^{G}\,\hookrightarrow\, Z_{\ast}(X) \,\xlongrightarrow{\pi_{\ast}}\,Z_{\ast}(X/G)
\end{equation*}
is $|G|\cdot\textrm{Id}$. Hence, we have the following commutative diagram: 
\begin{center}
\begin{tikzcd}
Z_{\ast}(X/G) \arrow[r, "\pi^{\ast}"] \arrow[d] & Z_{\ast}(X) \arrow[r, "\pi_{\ast}"] \arrow[d] \arrow[dr, phantom, ""] & Z_{\ast}(X/G) \arrow[d] \\
N_{\ast}(X/G) \arrow[r, "\pi^{\ast}"] &N_{\ast}(X)\arrow[r, "\pi_{\ast}"] & N_{\ast}(X/G).
\end{tikzcd}
\end{center} 
Furthermore, in \cite[Example 1.7.6]{Fu}, it has been shown that $Z_{\ast}(X/G)\,\cong\, Z_{\ast}(X)^{G}$. Thus by above diagram it is clear that
\begin{equation*}
\pi^{\ast}(\textrm{Num}(X/G))\ \subset\ Z_{\ast}(X)^{G}\,\cap\,\textrm{Num}(X).
\end{equation*}

Clearly, by Corollary \ref{canonical}, we have $N_{\ast}(X)^{G}\,=\,Z_{\ast}(X)^{G}/(Z_{\ast}(X)^{G}\,\cap\, 
\textrm{Num}(X))$. Hence, the following commutative diagram is obtained:
\begin{center}
\begin{tikzcd}
Z_{\ast}(X/G) \arrow[r, "\pi^{\ast}"] \arrow[d] & Z_{\ast}(X)^{G} \arrow[r, "\pi_{\ast}"] \arrow[d] \arrow[dr, phantom, ""] & Z_{\ast}(X/G) \arrow[d] \\
N_{\ast}(X/G) \arrow[r, "\pi^{\ast}"] &N_{\ast}(X)^{G}\arrow[r, "\pi_{\ast}"] & N_{\ast}(X/G).
\end{tikzcd}
\end{center}
Therefore, the composition of maps in the bottom row of the above diagram
coincides with $|G|\cdot\textrm{Id}$. Thus, we conclude the pullback and the pushforward maps
$\pi^{\ast}\,:\,N_{\ast}(X/G)\,\longrightarrow\, N_{\ast}(X)^{G}$ and $\pi_{\ast}\,
:\, N_{\ast}(X)^{G}\,\longrightarrow\, N_{\ast}(X/G)$ are isomorphisms.
\end{proof}

\begin{remark}
Note that $(\pi^{\ast})^{-1}\, =\, \frac{1}{|G|}\,\pi_{\ast}$, and similarly $(\pi_{\ast})^{-1}\, =\, \frac{1}{|G|}\,\pi^{\ast}$.
\end{remark}

\begin{corollary}\mbox{}
\begin{enumerate}
\item[$(i)$] The composition of maps $$N_{\ast}(X)^G\ \xlongrightarrow{\,\pi_{\ast}\,}\
 N_{\ast}(X/G) \ \xlongrightarrow{\,\pi^{\ast}\,}\ N_{\ast}(X)^G$$ equals \( |G| \cdot \mathrm{Id} \).

\item[$(ii)$] The composition of maps 
$$N_{\ast}(X)\ \xlongrightarrow{\,\pi_{\ast}\,}\ N_{\ast}(X/G)\ \xlongrightarrow{\,\pi^{\ast}\,}\ N_{\ast}(X)$$
satisfies the following:
$$\pi^{\ast} \pi_{\ast}(\alpha)\ =\ \sum_{g \in G} g \cdot \alpha$$
for all \( \alpha \,\in\, N_{\ast}(X) \).
\end{enumerate}
\end{corollary}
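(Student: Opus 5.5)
For $(i)$, the plan is to use Theorem \ref{lower} directly. That theorem says $\pi^{\ast}\colon N_{\ast}(X/G)\to N_{\ast}(X)^{G}$ is an isomorphism with $\pi_{\ast}\circ\pi^{\ast}=|G|\cdot\mathrm{Id}$. Given $\beta\in N_{\ast}(X)^G$, write $\beta=\pi^{\ast}\gamma$ for a unique $\gamma\in N_{\ast}(X/G)$. Then
\begin{equation*}
\pi^{\ast}\pi_{\ast}\beta \,=\, \pi^{\ast}\bigl(\pi_{\ast}\pi^{\ast}\gamma\bigr) \,=\, |G|\,\pi^{\ast}\gamma \,=\, |G|\,\beta .
\end{equation*}
This is purely formal and needs nothing beyond the theorem and the preceding remark.

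For $(ii)$, I will work at the level of cycles and then descend to numerical classes. Both $\pi_{\ast}$ and $\pi^{\ast}$ on $N_{\ast}$ are induced from maps on $Z_{\ast}$: proper pushforward, and flat pullback by Proposition \ref{da}. It therefore suffices to prove the identity
\begin{equation*}
\pi^{\ast}\pi_{\ast}[W]\,=\,\sum_{g\in G} g_{\ast}[W]
\end{equation*}
for every subvariety $W\subset X$, and then pass to quotients. I also need the $G$-action on $N_{\ast}(X)$ to be well defined. Each $g$ is an automorphism, and $g^{\ast}$ preserves vector bundles, so numerically trivial cycles are sent to numerically trivial cycles via the projection formula for Chern classes; hence $g_{\ast}$ descends to $N_{\ast}(X)$.

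To verify the cycle identity, let $V=\pi(W)$ and let $W_1=W,\dots,W_r$ be the components of $\pi^{-1}(V)$, which $G$ permutes transitively. Let $H$ be the stabilizer of $W$, so $r=|G|/|H|$. Using the previous proposition ($\pi^{\ast}=\pi^{\ast}_{\rm sp}$) and characteristic $0$, so that the inseparability degree is $1$, I get
\begin{equation*}
\pi^{\ast}\pi_{\ast}[W]\,=\,[K(W):K(V)]\,|I_W|\sum_{i=1}^r [W_i].
\end{equation*}
By the Galois-theoretic fact already used in the proof of Proposition \ref{F} (the surjection $H\to \mathrm{Aut}(K(W)/K(V))$ with kernel $I_W$, and normality of the extension), $[K(W):K(V)]=|H|/|I_W|$. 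Substituting, the coefficient becomes $|H|$. On the other side, $\sum_{g\in G}g_{\ast}[W]=|H|\sum_i[W_i]$, since each $W_i$ is hit by exactly one coset of $H$. The two sides therefore agree.

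The main obstacle is justifying the Galois statement for an arbitrary subvariety $W$ rather than a divisor. The citation \cite[Lemma~15.111.9]{SP} in Proposition \ref{F} was applied to a DVR. For general $W$, I would instead localize $\mathcal O(X)$ at the generic point of $V$ and use the standard fact that for a normal domain with finite group action, the decomposition group surjects onto the automorphism group of the residue field extension, and that extension is normal. As a fallback avoiding this, I would apply $\pi_{\ast}$ to both sides. Both give $|G|\,\pi_{\ast}[W]$, using $\pi_{\ast}\pi^{\ast}=|G|$ and $\pi_{\ast}g_{\ast}=\pi_{\ast}$. Both sides also lie in $Z_{\ast}(X)^G$, on which $\pi_{\ast}$ is injective by Theorem \ref{lower} at cycle level ($Z_{\ast}(X/G)\cong Z_{\ast}(X)^G$). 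This fallback in fact gives the cleanest proof, and I would likely present it that way.
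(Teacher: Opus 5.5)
Your proposal is correct. Part $(i)$ is exactly the paper's argument.

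For $(ii)$ your route differs from the paper's. The paper stays entirely in $N_\ast(X)$. It observes that $\pi^\ast\pi_\ast(g\cdot\alpha)=\pi^\ast\pi_\ast\alpha$ because $\pi\circ g=\pi$, and sums over $G$ to get $\pi^\ast\pi_\ast\bigl(\sum_g g\cdot\alpha\bigr)=|G|\,\pi^\ast\pi_\ast\alpha$. It then applies $(i)$ to the invariant class $\sum_g g\cdot\alpha$ and divides by $|G|$. No cycle-level computation and no descent argument are needed.

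Your fallback uses the same two ingredients: $\pi_\ast g_\ast=\pi_\ast$, and invertibility of $\pi_\ast$ on invariants. You phrase it as ``both sides are $G$-invariant with the same $\pi_\ast$-image,'' and you run it on $Z_\ast(X)$ before descending.
\begin{itemize}
\item What this buys: the stronger cycle-level identity $\pi^\ast\pi_\ast[W]=\sum_g g_\ast[W]$.
\item What it costs: the descent bookkeeping, including your check that each $g_\ast$ preserves numerical triviality. The paper takes that check for granted whenever it writes $N_\ast(X)^G$.
\end{itemize}
The detour through cycles is optional. The same injectivity argument works verbatim in $N_\ast(X)$, since $\pi^\ast\pi_\ast\alpha$ and $\sum_g g\cdot\alpha$ both lie in $N_\ast(X)^G$ and $\pi_\ast$ is injective there by Theorem \ref{lower}.

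Your explicit Galois computation is a genuinely different third route. It identifies the coefficient $|H|$ directly. However, it needs the decomposition/inertia fact for subvarieties of arbitrary codimension: the residue extension $K(W)/K(V)$ is normal, and $H\to\mathrm{Aut}(K(W)/K(V))$ is surjective with kernel $I_W$. This is standard commutative algebra for any finite group acting on a ring, with no normality or DVR hypothesis. Still, it is more input than the statement requires.
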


\begin{proof} Statement $(i)$: This follows immediately from Theorem \ref{lower}.

Proof of $(ii)$:\,\, Since $\pi$ is $G$-invariant, for $\alpha\,\in\,N_{\ast}(X)$ we have:
\begin{equation*}
\pi^{\ast}\pi_{\ast}\big(g\cdot\alpha\big)\ =\ \pi^{\ast}\pi_{\ast}(g_{\ast}\alpha)\ =\
\pi^{\ast}\pi_{\ast}\alpha
\end{equation*}
for all $g\,\in\,G$. Therefore,
\begin{equation*}
\pi^{\ast}\pi_{\ast}\big(\sum_{g\,\in\, G}g\cdot\alpha \big)\,=\,
\sum_{g\,\in\, G}\pi^{\ast}\pi_{\ast}(g\cdot\alpha)\,=\,\sum_{g\in G}\pi^{\ast}\pi_{\ast}\alpha\,=\,|G|\,\pi^{\ast}\pi_{\ast}\alpha. 
\end{equation*}
As $\sum_{g\,\in\, G}g\cdot\alpha$ is a $G$--invariant cycle, combining $(i)$ with the above equality we obtain: 
\begin{equation*}
 \pi^{\ast}\pi_{\ast}\alpha = \sum_{g\,\in\,G}g\cdot \alpha.    
\end{equation*}
\end{proof}
As a consequence of the above results, the dual numerical groups are also isomorphic. More precisely, we have the following:

\begin{corollary}\label{invariant}
\begin{enumerate}
\item[$(i)$] $$N^{\ast}(X/G)\ \cong\ N^{\ast}(X)^{G}$$ and the composition of maps
\begin{equation*}
N^{\ast}(X/G)\ \xlongrightarrow{\,\pi^{\ast}\,}\ N^{\ast}(X)^{G}\ \xlongrightarrow{\,\pi_{\ast}\,}\
N^{\ast}(X/G) 
\end{equation*}
equals $|G|\cdot\emph{Id}$.

\item[$(ii)$] The composition of maps $$N^{\ast}(X)^G\ \xlongrightarrow{\,\pi_{\ast}\,}\
 N^{\ast}(X/G) \ \xlongrightarrow{\,\pi^{\ast}\,}\ N^{\ast}(X)^G$$ equals \( |G| \cdot \mathrm{Id} \).
\end{enumerate}

\end{corollary}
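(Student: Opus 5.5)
The plan is to dualize Theorem \ref{lower} and its corollary, using Corollary \ref{canonical}(ii) to pass from invariants of a dual to the dual of invariants. Throughout, $N^{k}(-)=(N_{k}(-))^{\vee}$, and I interpret the maps in the statement as the transposes of the lower maps on numerical groups:
\[
\pi^{\ast}:=(\pi_{\ast})^{\vee}:N^{k}(X/G)\to N^{k}(X),\qquad \pi_{\ast}:=(\pi^{\ast})^{\vee}:N^{k}(X)\to N^{k}(X/G).
\]
The second map is the dual pushforward of Proposition \ref{da}, since $\pi$ is flat of relative dimension $0$. The first map agrees with the usual pullback of Chern polynomials by the projection formula $\pi_\ast(\pi^\ast P\cap\alpha)=P\cap\pi_\ast\alpha$. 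The $G$-action on $N^{k}(X)$ is the contragredient action, $(g\cdot f)(\alpha)=f(g^{-1}\cdot\alpha)$.

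\textbf{Step 1 (image lands in the invariants).} Since $\pi\circ g=\pi$, we have $\pi_\ast g_\ast=\pi_\ast$ on $N_k(X)$. Dualizing, $g\cdot\pi^{\ast}f=\pi^{\ast}f$, so $\pi^{\ast}$ maps $N^{k}(X/G)$ into $N^{k}(X)^G$.

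\textbf{Step 2 (the isomorphism).} Restrict $\pi_\ast$ to $N_k(X)^G$. By Theorem \ref{lower} and the Remark after it, this restriction is an isomorphism $N_k(X)^G\cong N_k(X/G)$ with inverse $\frac{1}{|G|}\pi^\ast$. Dualizing gives $N^{k}(X/G)\cong (N_k(X)^G)^{\vee}$. By Corollary \ref{canonical}(ii), restriction along $i:N_k(X)^G\hookrightarrow N_k(X)$ gives an isomorphism $(N^{k}(X))^G\cong (N_k(X)^G)^{\vee}$. Composing these yields $N^{k}(X/G)\cong N^{k}(X)^G$.

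It remains to check that this composite is exactly $\pi^\ast$. For $f\in N^k(X/G)$, the element $\pi^\ast f=f\circ\pi_\ast$ restricts to $f\circ(\pi_\ast|_{N_k(X)^G})$ on $N_k(X)^G$, which is the dual of the isomorphism in Theorem \ref{lower}. Hence $\pi^\ast:N^k(X/G)\to N^k(X)^G$ is an isomorphism.

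\textbf{Step 3 (the two compositions).} For $f\in N^k(X/G)$ and $\beta\in N_k(X/G)$, transposition gives
\[
(\pi_\ast\pi^\ast f)(\beta)=f(\pi_\ast\pi^\ast\beta)=|G|\,f(\beta)
\]
by Theorem \ref{lower}. This proves (i).

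For (ii), take $h\in N^k(X)^G$ and $\alpha\in N_k(X)$. Then
\[
(\pi^\ast\pi_\ast h)(\alpha)=h(\pi^\ast\pi_\ast\alpha)=h\Bigl(\sum_{g\in G} g\cdot\alpha\Bigr)=\sum_{g\in G}(g^{-1}\cdot h)(\alpha)=|G|\,h(\alpha),
\]
where the second equality uses part (ii) of the preceding Corollary and the last uses the $G$-invariance of $h$. Alternatively, (ii) follows formally from (i) together with bijectivity: writing $h=\pi^\ast f$, we get $\pi^\ast\pi_\ast\pi^\ast f=|G|\,\pi^\ast f$.

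\textbf{Main obstacle.} The only delicate point is bookkeeping: one must confirm that the dual maps coming from Proposition \ref{da} coincide with these transposes, and that the $G$-action on $N^k(X)$ is the contragredient action, so that Corollary \ref{canonical}(ii) applies. The first follows from the definition of the Chern-class action together with the projection formula. The second holds because $g^\ast$ on Chern polynomials is adjoint to $g_\ast$ on cycles.
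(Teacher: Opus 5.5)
Your proposal is correct and follows the paper's route: dualize the isomorphism of Theorem \ref{lower}, and use Corollary \ref{canonical}(ii) to identify $(N_\ast(X)^G)^\vee$ with $N^\ast(X)^G$. You also make explicit several points the paper leaves implicit. These are that the upper $\pi^\ast,\pi_\ast$ are the transposes of the lower $\pi_\ast,\pi^\ast$, that $\pi^\ast$ lands in the $G$-invariants, and a direct proof of (ii) via $\pi^\ast\pi_\ast\alpha=\sum_{g} g\cdot\alpha$ (or via bijectivity), where the paper only says ``analogously''.
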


\begin{proof}
By Corollary \ref{canonical}, we have a natural isomorphism:
\begin{equation*}
(N_{\ast}(X)^{G})^{\vee}\ =\ (N_{\ast}(X)^{\vee})^{G}\ =\ N^{\ast}(X)^{G}.
\end{equation*}
Using this isomorphism and taking the dual of 
\begin{equation*}
N_{\ast}(X/G)\ \xlongrightarrow{\,\pi^{\ast}\,}\ N_{\ast}(X)^{G}\ 
\xlongrightarrow{\,\pi_{\ast}\,}\ N_{\ast}(X/G), 
\end{equation*}
the following sequence is obtained:
\begin{equation*}
N^{\ast}(X/G)\ \xlongrightarrow{\,\pi^{\ast}\, }\ N^{\ast}(X)^{G}\
\xlongrightarrow{\,\pi_{\ast}\, }\ N^{\ast}(X/G).
\end{equation*}
Clearly, the composition of the above maps coincides with $|G|\cdot\textrm{Id}$. Thus, $N^{\ast}(X/G)\,
\cong\, N^{\ast}(X)^{G}$. The next statement may be proved analogously. 
\end{proof}

Recall that we have pushforward map $\pi_{\ast}\,:\, \overline{\textrm{Eff}}_{k}(X)\ \xlongrightarrow\ \overline{\textrm{Eff}}_{k}(X/G)$ (see \cite[Corollary 3.22]{FL}) induced by $\pi \,:\, X \,\to\, X/G$, which is continuous.

\begin{theorem}\label{pullback}
The pullback map $\pi^{\ast}\,:\, N_{\ast}(X/G)\, \longrightarrow\, N_{\ast}(X),$ when restricted to the pseudoeffective cone ${\overline{\emph{Eff}}_{k}(X/G)}$, induces a map 
${\overline{\emph{Eff}}_{k}(X/G)}\ \xlongrightarrow{\,\pi^{\ast}\, }\ \overline{\emph{Eff}}_{k}(X)$. Furthermore, we have isomorphisms of cones ${\overline{\emph{Eff}}_{k}(X/G)}\ \xlongrightarrow{\,\pi^{\ast}\, }\ \overline{\emph{Eff}}_{k}(X)^{G}$, and $\overline{\emph{Eff}}_{k}(X)^{G}\ \xlongrightarrow{\,\pi_{\ast}\, }\ {\overline{\emph{Eff}}_{k}(X/G)}$ where $\overline{\emph{Eff}}_{k}(X)^{G} = \overline{\emph{Eff}}_{k}(X)\, \cap\, N_k(X)^{G}$.
\end{theorem}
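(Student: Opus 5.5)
The proof splits into three steps: $\pi^{\ast}$ preserves pseudoeffectivity, $\pi_{\ast}$ preserves pseudoeffectivity on invariant classes, and the two restricted maps are mutually inverse up to the factor $|G|$.

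\emph{Step 1: $\pi^{\ast}$ preserves pseudoeffective classes.} By linearity and continuity of $\pi^{\ast}\colon N_k(X/G)\to N_k(X)$ (Proposition \ref{da}), it suffices to check effective generators. Let $V\subset X/G$ be a $k$-dimensional subvariety. By the preceding proposition, $\pi^{\ast}[V]=\pi^{\ast}_{\mathrm{sp}}[V]=\sum e_W[W]$, where the sum runs over the components of $\pi^{-1}(V)$. Each $W$ has dimension $k$ because $\pi$ is finite, and each coefficient $e_W$ is nonnegative. So $\pi^{\ast}[V]$ is effective, and $\pi^{\ast}$ maps the cone generated by effective classes into $\overline{\mathrm{Eff}}_k(X)$. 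Since $\pi^{\ast}$ is a continuous linear map between finite-dimensional spaces, it then sends the closure $\overline{\mathrm{Eff}}_k(X/G)$ into the closed cone $\overline{\mathrm{Eff}}_k(X)$. By Theorem \ref{lower}, the image also lies in $N_k(X)^G$, hence in $\overline{\mathrm{Eff}}_k(X)^G=\overline{\mathrm{Eff}}_k(X)\cap N_k(X)^G$.

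\emph{Step 2: $\pi_{\ast}$ preserves pseudoeffective classes.} The statement quoted from \cite[Corollary 3.22]{FL} gives $\pi_{\ast}(\overline{\mathrm{Eff}}_k(X))\subset\overline{\mathrm{Eff}}_k(X/G)$. Alternatively, one can see this directly: $\pi_{\ast}[W]=\deg(W/\pi(W))[\pi(W)]$ is effective, and one extends by continuity. In particular, $\pi_{\ast}$ maps $\overline{\mathrm{Eff}}_k(X)^G$ into $\overline{\mathrm{Eff}}_k(X/G)$.

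\emph{Step 3: the restrictions are bijections of cones.} Theorem \ref{lower} and part (i) of the subsequent corollary give $\pi_{\ast}\circ\pi^{\ast}=|G|\cdot\mathrm{Id}$ on $N_k(X/G)$ and $\pi^{\ast}\circ\pi_{\ast}=|G|\cdot\mathrm{Id}$ on $N_k(X)^G$. Surjectivity of $\pi^{\ast}\colon\overline{\mathrm{Eff}}_k(X/G)\to\overline{\mathrm{Eff}}_k(X)^G$ follows: for $\alpha\in\overline{\mathrm{Eff}}_k(X)^G$, the class $\beta=\frac{1}{|G|}\pi_{\ast}\alpha$ lies in $\overline{\mathrm{Eff}}_k(X/G)$ by Step 2, since the cone is stable under positive scaling, and $\pi^{\ast}\beta=\alpha$. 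Injectivity follows from the injectivity of $\pi^{\ast}$ on all of $N_k(X/G)$. The same argument with the roles of the maps exchanged shows $\pi_{\ast}\colon\overline{\mathrm{Eff}}_k(X)^G\to\overline{\mathrm{Eff}}_k(X/G)$ is a bijection, with inverse $\frac{1}{|G|}\pi^{\ast}$. Both maps are restrictions of linear isomorphisms, so they are linear homeomorphisms of cones, i.e.\ isomorphisms of cones.

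The step that needs the most care is Step 1, specifically the nonnegativity of the coefficients of $\pi^{\ast}[V]$. This is why I route the argument through the identification $\pi^{\ast}=\pi^{\ast}_{\mathrm{sp}}$: the special pullback has visibly positive coefficients $e_W=\mathrm{card}(I_W)/\deg_i\geq 0$. As an alternative, one could note directly that the flat pullback of a subvariety is the fundamental cycle of the scheme $\pi^{-1}(V)$, whose multiplicities are lengths and hence positive. The passage to closures is routine because $\pi^{\ast}$ is linear between finite-dimensional real vector spaces and therefore continuous.
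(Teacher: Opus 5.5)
Your proposal is correct and follows essentially the same route as the paper. You use continuity and effectivity of $\pi^{\ast}$ for the first claim, and the identities $\pi_{\ast}\circ\pi^{\ast}=|G|\cdot\mathrm{Id}$ on $N_k(X/G)$ and $\pi^{\ast}\circ\pi_{\ast}=|G|\cdot\mathrm{Id}$ on $N_k(X)^G$ for bijectivity. The one difference is in surjectivity: the paper approximates $\alpha\in\overline{\mathrm{Eff}}_k(X)^G$ by $G$-averaged effective classes $\alpha_i$ and takes the limit of $\frac{1}{|G|}\pi_{\ast}\alpha_i$, whereas you apply $\frac{1}{|G|}\pi_{\ast}$ to $\alpha$ directly, using that $\pi_{\ast}$ preserves pseudoeffective cones, which is slightly cleaner.
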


\begin{proof}
Clearly, the pullback map $\pi^{\ast}$ is continuous. By definition, the pullback of an effective cycle is effective.
Using the continuity of $\pi^{\ast}$ it follows that if $\alpha$ lies in the closed cone
$\overline{\textrm{Eff}}_{k}(X/G)$, then the pullback $\pi^{\ast}(\alpha)$ lies in
$\overline{\textrm{Eff}}_{k}(X)$. Moreover, restricting the composition of maps in \eqref{g}, we obtain the following:
\begin{equation*}
\overline{\textrm{Eff}}_{k}(X/G)\ \xlongrightarrow{\,\pi^{\ast}\,}\ \overline{\textrm{Eff}}_{k}(X)\
\xlongrightarrow{\,\pi_{\ast}\, }\ \overline{\textrm{Eff}}_{k}(X/G),
\end{equation*}
and again this composition of maps agrees with multiplication by $|G|\cdot \textrm{Id}$. Consequently $\pi^{\ast}$ is injective and $\pi_{\ast}$ is surjective. Clearly the pullback $\pi^{\ast}$ factors through $\overline{\textrm{Eff}}_{k}(X)^G \,\subset\, \overline{\textrm{Eff}}_{k}(X)$. 

Now consider $\alpha \in \overline{\textrm{Eff}}_{k}(X)^G$ and assume that $\alpha_i$ are effective classes converging to $\alpha$. Replacing $\alpha_i$ with $\frac{1}{|G|} \sum_{g\,\in\,G}g\cdot\alpha_i$, if necessary, we may assume that $\alpha_i\, \in\, N_k(X)^G$. By Theorem \ref{lower}, we have $\beta_i \,\in\, N_k(X/G)$ such that $\pi^{\ast}(\beta_i)\, =\, \alpha_i$ for all $i$. Observe that $\pi_{\ast}\pi^{\ast}(\beta_i) = |G| \beta_i$, and hence $\beta_i = \frac{1}{|G|}\pi_{\ast}(\alpha_i)$. As $\alpha_i$ is effective, we deduce that $\beta_i$ must be effective for all $i$. As $\pi_{\ast}\, :\, N_{\ast}(X)^G \,\to\, N_{\ast}(X/G)$ is a continuous homomorphism, it follows that $\beta_i$ must converge to a class $\beta$. Clearly $\beta \,\in\, \overline{\textrm{Eff}}_{k}(X/G)$ and $\pi^{\ast}(\beta) \,=\, \alpha$. Thus, we deduce that $\pi^{\ast}$ is a surjection and hence a bijection. Consequently, the same is true for $\pi_{\ast}$. 
\end{proof}

The following lemma will be needed in the proof of the corresponding result for the pullback and pushforward
maps between $N^{\ast}(X/G)$ and $N^{\ast}(X)$.

\begin{lemma}\label{distributes}
For any class $P\,\in\, N^{\ast}(X)$,
$$\pi_{\ast}(\phi_{X}(P))\ =\ \phi_{X/G}(\pi_{\ast}(P)).$$ More
precisely, $\pi_{\ast}(P\,\cap\,[X])\,=\,\frac{1}{|G|}(\pi_{\ast}(P)\,\cap\,\pi_{\ast}[X])$.
\end{lemma}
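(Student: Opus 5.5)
The plan is to check the identity numerically, by pairing both sides against an arbitrary dual class on $X/G$. Fix $k$ and $P\in N^{k}(X)$. Since $N_{n-k}(X/G)$ is finite dimensional and $N^{n-k}(X/G)=(N_{n-k}(X/G))^{\vee}$, two classes in $N_{n-k}(X/G)$ coincide as soon as they have the same degree against every $Q\in N^{n-k}(X/G)$. By \eqref{upper numerical groups}, such a $Q$ is represented by a homogeneous Chern polynomial of weight $n-k$ in vector bundles on $X/G$.

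Recall from Proposition \ref{da} that $\pi_{\ast}\colon N^{k}(X)\to N^{k}(X/G)$ is the transpose of the flat pullback $\pi^{\ast}\colon N_{k}(X/G)\to N_{k}(X)$. Concretely, $(\pi_{\ast}P)(\beta)=\deg(P\cap\pi^{\ast}\beta)$ for $\beta\in N_k(X/G)$. The element $\pi_{\ast}P$ is a priori only an abstract functional, not visibly a Chern polynomial on $X/G$. So the first step is to make sense of $\phi_{X/G}(\pi_{\ast}P)=\pi_{\ast}P\cap[X/G]$. For a genuine Chern polynomial $c$ of weight $k$, commutativity of Chern class operators gives
\[
\deg\bigl(Q\cap(c\cap[X/G])\bigr)\ =\ \deg\bigl(c\cap(Q\cap[X/G])\bigr).
\]
Hence $\phi_{X/G}(c)$ is the class characterized by pairing $Q\mapsto c(Q\cap[X/G])$, and I will use this as the definition of $\phi_{X/G}$ on all of $N^{k}(X/G)$. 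Since every element of $N^k(X/G)$ is represented by a Chern polynomial, this is consistent, and it yields $\deg(Q\cap\phi_{X/G}(\pi_\ast P))=\deg(P\cap\pi^\ast(Q\cap[X/G]))$.

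For the other side, I compute
\[
\deg\bigl(Q\cap\pi_{\ast}(P\cap[X])\bigr)\ =\ \deg\bigl(\pi^{\ast}Q\cap P\cap[X]\bigr),
\]
using the projection formula for Chern classes \cite[Proposition 3.2(c)]{Fu} and the fact that $\pi_{\ast}$ preserves degrees of $0$-cycles. Commutativity of Chern class operators turns this into $\deg(P\cap(\pi^{\ast}Q\cap[X]))$. Compatibility of Chern classes with flat pullback \cite[Proposition 3.2(d)]{Fu}, together with $\pi^{\ast}[X/G]=[X]$ for the finite flat map $\pi$, gives $\pi^{\ast}Q\cap[X]=\pi^{\ast}(Q\cap[X/G])$. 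Therefore
\[
\deg\bigl(Q\cap\pi_{\ast}(P\cap[X])\bigr)\ =\ \deg\bigl(P\cap\pi^{\ast}(Q\cap[X/G])\bigr)\ =\ (\pi_{\ast}P)(Q\cap[X/G]).
\]
This is exactly $\deg\bigl(Q\cap\phi_{X/G}(\pi_{\ast}P)\bigr)$, which proves the first equality. The "more precisely" form then follows from $\pi_{\ast}[X]=|G|\,[X/G]$, since $\deg\pi=|G|$ (cf.\ \eqref{comp}). Hence $\frac{1}{|G|}\bigl(\pi_{\ast}(P)\cap\pi_{\ast}[X]\bigr)=\pi_{\ast}(P)\cap[X/G]$.

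I expect the main obstacle to be the bookkeeping of the first step: making sure that capping an abstract element of $N^{k}(X/G)$ with $[X/G]$ is well defined and agrees with capping a Chern polynomial representative. If one wants $\pi_{\ast}P$ to be an actual Chern polynomial on $X/G$, one could instead try the norm or pushforward of bundles along the finite flat map. However, the duality route above avoids this, and I would use it first.
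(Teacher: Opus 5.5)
Your proof is correct, but it takes a different route from the paper's.

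\textbf{The paper's route.} It stays inside the equivariant framework. It replaces $P$ by the invariant class $\sum_{g\in G} g\cdot P$ and uses Corollary \ref{invariant} to write this class as $\pi^{\ast}Q$, where $Q=\pi_{\ast}P\in N^{\ast}(X/G)$. It then computes $\pi_{\ast}(\pi^{\ast}Q\cap\pi^{\ast}[X/G])=\pi_{\ast}\pi^{\ast}(Q\cap[X/G])=|G|\,(Q\cap[X/G])$, using flat-pullback compatibility of Chern classes and $\pi_{\ast}\pi^{\ast}=|G|$ from Theorem \ref{lower}. Finally it undoes the averaging via $g\cdot P\cap[X]=g^{\ast}(P\cap[X])$ and the $G$-invariance of $\pi_{\ast}$.

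\textbf{Your route.} You never average. You test both sides against an arbitrary $Q\in N^{n-k}(X/G)$. You use only the projection formula, commutativity of Chern class operators, flat-pullback compatibility, and the definition of $\pi_{\ast}$ on $N^{\ast}$ as the transpose of flat pullback on $N_{\ast}$.

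\textbf{What each buys.} Your approach gains generality and transparency.
\begin{enumerate}
\item[(a)] The group action enters only through $\pi_{\ast}[X]=|G|\,[X/G]$. So the identity $\pi_{\ast}\circ\phi_X=\phi_{Y}\circ\pi_{\ast}$ holds for any flat morphism of normal projective varieties.
\item[(b)] You make explicit a step the paper uses tacitly. The abstractly defined $\pi^{\ast}Q$, the dual of $\pi_{\ast}$ on cycles, must be represented by the Chern polynomial in the pulled-back bundles; only then can \cite[Proposition 3.2(d)]{Fu} be applied to it. That identification is exactly your projection-formula step.
\end{enumerate}
The paper's approach, in return, reuses Theorem \ref{lower} and Corollary \ref{invariant} directly. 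It keeps the argument phrased through the invariant-class isomorphisms that drive the rest of Section 3. The cost is somewhat roundabout bookkeeping with powers of $|G|$.
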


\begin{proof}
Take $P\,\in\,N^{\ast}(X)$. Consider the $G$-invariant class $\sum_{g\in G}g\cdot P$. By Corollary 
\ref{invariant}, this sum descends to a class $Q\,\in\, N^{\ast}(X/G)$ such that
\begin{equation*}
\pi^{\ast}(Q)\ =\ \sum_{g\in G}g\cdot P.
\end{equation*}
As seen earlier it can be seen that

\begin{equation*}
\pi_{\ast}P\ =\ Q.
\end{equation*}
Now, we observe that
\begin{equation}
\pi_{\ast}\left(\sum_{g\in G} g\cdot P \cap [X]\right)\,=\, \pi_{\ast}(\pi^{\ast}Q\cap\pi^{\ast}[X/G])\,=\, \pi_{\ast}\pi^{\ast}(Q\,\cap\, [X/G])\,=\,|G|(Q\,\cap\, [X/G]). 
\end{equation}
On the other hand, we also have
\begin{equation*}
\pi_{\ast}\left(\sum_{g\in G} g\cdot P\right)\,\cap\, \pi_{\ast}[X]\,=\,|G|Q \,\cap\, |G|\big[X/G\big]
\end{equation*}
\begin{equation*}
\hspace{4.3cm} = |G|^{2} \big(Q\,\cap\, \big[X/G\big]\big).
\end{equation*}
So,
\begin{equation}\label{3}
\pi_{\ast}\left(\sum_{g\in G} g\cdot P \cap [X]\right)\,=\, \frac{1}{|G|}\left(\pi_{\ast}\left(\sum_{g\in G}g\cdot P\right)\cap \pi_{\ast}[X]\right). 
\end{equation}
Now observe that the cap product is compatible with pullback under group action:
\begin{equation*}
g\cdot P\,\cap\, [X]\,=\, g^{\ast}P\,\cap\,g^{\ast}[X]\,=\, g^{\ast}(P\,\cap\,[X]). 
\end{equation*}
So
\begin{equation*}
\sum_{g\in G} g\cdot P\,\cap\, [X]\,=\, \sum_{g\in G}g^{\ast}(P\,\cap\,[X]).
\end{equation*}
Then, applying $\pi_{\ast}$ and using the $G$-invariance of $\pi$,
\begin{equation}\label{4}
\pi_{\ast}\left(\sum_{g\in G}g\cdot P\,\cap\, [X]\right)\,=\, \sum_{g\in G}\pi_{\ast}(g^{\ast}(P\,\cap\, [X]))\,=\, |G|(\pi_{\ast}(P\,\cap\,[X])). 
\end{equation}
Combining \eqref{3} and \eqref{4}, we conclude that
\begin{equation*}
 |G|\pi_{\ast}(P\,\cap\,[X])\ =\
\frac{1}{|G|}\left(\pi_{\ast}\left(\sum_{g\in G}g\cdot P\right)\,\cap\,\pi_{\ast}[X])\right).
\end{equation*}
Combining this with the fact that $\pi_{\ast}\left(\sum_{g\in G}g\cdot P\right)\,=\, |G|\pi_{\ast}(P)$,
\begin{equation*}
 \pi_{\ast}(\phi_{X}(P))\,=\,\pi_{\ast}(P\,\cap\,[X])\ =\ \frac{1}{|G|}\left((\pi_{\ast}(P)\,\cap\, \pi_{\ast}[X]\right))\,=\,\phi_{X/G}(\pi_{\ast}(P)),
 \end{equation*}
which proves the lemma.
\end{proof}

\begin{theorem}\label{pseu}
The pullback and pushforward maps between $N^{\ast}(X)$ and $N^{\ast}(X/G)$ restrict to morphism of
pseudoeffective dual cones as follows:
\begin{enumerate}
\item[$(i)$] The pullback map $\pi^{\ast}\,:\, N^{\ast}(X/G)\, \longrightarrow\, N^{\ast}(X)$ restricts to
a map between the cones of pseudoeffective dual classes
\begin{equation*}
{\overline{\emph{Eff}}^{\,k}(X/G)}\ \xlongrightarrow{\,\pi^{\ast}\,}\ \overline{\emph{Eff}}^{\,k}(X),
\end{equation*}
which induces an isomorphism $\overline{\emph{Eff}}^{\,k}(X/G)\ \xlongrightarrow{\,\pi^{\ast}\,}\ \overline{\emph{Eff}}^{\,k}(X)^G$ for all $k\,\in\, \{1,\,\cdots,\,n\}$, where $n\,=\, \dim X$.

\item[$(ii)$] The pushforward map $\pi_{\ast}\ :\ N^{\ast}(X)\ \longrightarrow\ N^{\ast}(X/G)$ restricts to a map between the pseudoeffective dual classes
\begin{equation*}
\overline{\emph{Eff}}^{\,k}(X)\ \xlongrightarrow{\pi_{\ast}}\ \overline{\emph{Eff}}^{\,k}(X/G)
\end{equation*}
which induces an isomorphism $\overline{\emph{Eff}}^{\,k}(X)^G\ \xlongrightarrow{\pi_{\ast}}\ \overline{\emph{Eff}}^{\,k}(X/G)$
for all $k\,\in\, \{1,\,\cdots,\, n\}$, where $n\,=\, \dim X$.
\end{enumerate}    
\end{theorem}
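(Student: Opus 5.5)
The plan is to transport everything through the cyclification maps $\phi_X\colon N^{k}(X)\to N_{n-k}(X)$ and $\phi_{X/G}\colon N^{k}(X/G)\to N_{n-k}(X/G)$. By definition, $\overline{\textrm{Eff}}^{\,k}=\phi^{-1}(\overline{\textrm{Eff}}_{n-k})$. We then reduce to Theorem \ref{pullback} (the statement for $\overline{\textrm{Eff}}_{n-k}$), Corollary \ref{invariant} (the isomorphisms on dual groups), and Lemma \ref{distributes} (compatibility of $\phi$ with $\pi_{\ast}$).

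\textbf{Step 1: $\phi$ commutes with $\pi^{\ast}$.} We first establish
\[
\phi_X(\pi^{\ast}Q)=\pi^{\ast}\phi_{X/G}(Q) \quad\text{for } Q\in N^{k}(X/G).
\]
Since $\pi$ is flat, $\pi^{\ast}[X/G]=[X]$, and Chern class operations commute with flat pullback (\cite[Prop.\ 3.2]{Fu}). Hence $\pi^{\ast}Q\cap[X]=\pi^{\ast}(Q\cap[X/G])$, which is exactly the identity already used inside the proof of Lemma \ref{distributes}.

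\textbf{Step 2: the cones map into each other.} If $Q\in\overline{\textrm{Eff}}^{\,k}(X/G)$, then $\phi_{X/G}(Q)$ is pseudoeffective. By Theorem \ref{pullback} its pullback $\pi^{\ast}\phi_{X/G}(Q)=\phi_X(\pi^{\ast}Q)$ is pseudoeffective, so $\pi^{\ast}Q\in\overline{\textrm{Eff}}^{\,k}(X)$. For $\pi^{\ast}$ of a class from $X/G$, $G$-invariance is automatic, so the image lies in $\overline{\textrm{Eff}}^{\,k}(X)^G$. Similarly, if $P\in\overline{\textrm{Eff}}^{\,k}(X)$, then Lemma \ref{distributes} gives $\phi_{X/G}(\pi_{\ast}P)=\pi_{\ast}\phi_X(P)$, and the right side lies in $\overline{\textrm{Eff}}_{n-k}(X/G)$ by \cite[Corollary 3.22]{FL}. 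Hence $\pi_{\ast}P\in\overline{\textrm{Eff}}^{\,k}(X/G)$. This gives the two restricted maps in (i) and (ii).

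\textbf{Step 3: bijectivity.} By Corollary \ref{invariant}, $\pi^{\ast}\colon N^{k}(X/G)\to N^{k}(X)^G$ and $\pi_{\ast}\colon N^{k}(X)^G\to N^{k}(X/G)$ are linear isomorphisms whose compositions in either order equal $|G|\cdot\textrm{Id}$. Injectivity on the cones is therefore immediate. For surjectivity in (i), take $P\in\overline{\textrm{Eff}}^{\,k}(X)^G$ and set $Q=\frac{1}{|G|}\pi_{\ast}P$. By Step 2, $Q\in\overline{\textrm{Eff}}^{\,k}(X/G)$, and $\pi^{\ast}Q=P$ by Corollary \ref{invariant}(ii). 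Then (ii) follows formally, since on $G$-invariant classes $\pi_{\ast}=|G|(\pi^{\ast})^{-1}$, and positive scaling preserves cones.

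\textbf{Main obstacle.} The only delicate point is Step 1, the commutation of cyclification with flat pullback. In particular, one must check that the pullback $\pi^{\ast}$ on $N^{\ast}$ produced by Corollary \ref{invariant} (as a dual map) agrees with pullback of Chern polynomials, $c_i(E)\mapsto c_i(\pi^{\ast}E)$. I would verify this by pairing with an arbitrary $\alpha\in N_k(X)^G$ and using the projection formula $\pi_{\ast}(\pi^{\ast}Q\cap\alpha)=Q\cap\pi_{\ast}\alpha$. This shows that the Chern-polynomial pullback is the adjoint of $\pi_{\ast}$ restricted to invariant classes, up to the normalization built into Corollary \ref{invariant}.
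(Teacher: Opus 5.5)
Your proposal is correct and follows the paper's proof essentially step for step. You commute cyclification with flat pullback via $\pi^{\ast}[X/G]=[X]$ and Fulton's Theorem 3.2(d), then use Theorem \ref{pullback} and Lemma \ref{distributes} together with \cite[Corollary 3.22]{FL} to obtain the two restricted cone maps, and finally deduce bijectivity from the $|G|\cdot\mathrm{Id}$ compositions of Corollary \ref{invariant}. Your extra check, that the dual-map pullback on $N^{\ast}$ agrees with Chern-polynomial pullback by the projection formula (no normalization is actually needed there), is used silently in the paper and is a welcome clarification rather than a deviation.
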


\begin{proof}
Consider the cyclification maps
\begin{equation*}
N^{k}(X)\,\times\, N_{n}(X)\,\xlongrightarrow{\phi_{X}}\,N_{n-k}(X)\quad \textnormal{and} \quad N^{k}(X/G)\,\times\, N_{n}(X/G)\,\xlongrightarrow{\phi_{X/G}}\,N_{n-k}(X/G),
\end{equation*}
defined for $X$ and $X/G$ respectively. Take any $\alpha\,\in\,\overline{\textrm{Eff}}^{\,k}(X/G)$. To prove
that $\pi^{\ast}(\alpha)\,\in\,\overline{\textrm{Eff}}^{\,k}(X),$ it suffices to show that
$\pi^{\ast}(\alpha)\,\cap\, [X]\,\in\, \overline{\textrm{Eff}}_{n-k}(X)$. Since $\alpha\,\in\,\overline{\textrm{Eff}}^{\,k}(X/G)$, by definition,
\begin{equation*}
\phi_{X/G}(\alpha)\,=\, \alpha\,\cap\, [X/G]\,\in\,\overline{\textrm{Eff}}_{n-k}(X/G). 
\end{equation*}
Now consider
\begin{equation*}
\phi_{X}(\pi^{\ast}(\alpha))\,=\, \pi^{\ast}(\alpha)\,\cap\,[X]\,=\, \pi^{\ast}(\alpha)\,\cap\,\pi^{\ast}[X/G]
\,=\, \pi^{\ast}(\alpha\, \cap\, [X/G]),
\end{equation*}

where the last equality follows from \textnormal{\cite[Theorem 3.2(d)]{Fu}}. From Theorem \ref{pullback}, we know that the pullback
\begin{equation*}
\overline{\textrm{Eff}}_{n-k}(X/G)\ \xlongrightarrow{\,\pi^{\ast}\, }\ \overline{\textrm{Eff}}_{n-k}(X)
\end{equation*}
is a well-defined map. Therefore, $\pi^{\ast}(\alpha\,\cap\,[X/G])\,\in\, \overline{\textrm{Eff}}_{n-k}(X)$,
and hence $\pi^{\ast}(\alpha)\,\in\, \overline{\textrm{Eff}}^{\,k}(X)$. Thus, we have a well-defined map
$\overline{\textrm{Eff}}^{\,k}(X/G)\,\xlongrightarrow{\,\pi^{\ast}\,}\, \overline{\textrm{Eff}}^{\,k}(X)$. Clearly the image of $\pi^{\ast}$ lies in $\overline{\textrm{Eff}}^{\,k}(X)^G$.

On the other hand, take $P\,\in\,\overline{\textrm{Eff}}^{\,k}(X)$. By definition,  $P\,\cap\,[X]\,\in\,
 \overline{\textrm{Eff}}_{n-k}(X)$. By Lemma \ref{distributes},
 \begin{equation*}
 \pi_{\ast}(P\,\cap\,[X])\ =\ \frac{1}{|G|}(\pi_{\ast}(P)\,\cap\, \pi_{\ast}[X])
 \ =\ \pi_{\ast}(P)\,\cap\, [X/G].
 \end{equation*}
 As $\pi_{\ast}(P\,\cap\,[X]) \in \overline{\textrm{Eff}}_{n-k}(X/G)$, it follows from definition that $\pi_{\ast}(P)\,\in\,\overline{\textrm{Eff}}^{\,k}(X/G)$. In particular, by restricting $\pi_{\ast}$, we get a map $\overline{\textrm{Eff}}^{\,k}(X)^G\ \xlongrightarrow{\,\pi_{\ast}\,}\ \overline{\textrm{Eff}}^{\,k}(X/G)$.

Thus, on restricting the composition of maps in Corollary \ref{invariant}, it follows that the
 composition of maps
 \begin{equation*}
 \overline{\textrm{Eff}}^{\,k}(X/G)\ \xlongrightarrow{\,\pi^{\ast}\, }\
 \overline{\textrm{Eff}}^{\,k}(X)^G\ \xlongrightarrow{\,\pi_{\ast}\,}\ \overline{\textrm{Eff}}^{\,k}(X/G)
 \end{equation*}
and,
\begin{equation*}
 \overline{\textrm{Eff}}^{\,k}(X)^G\ \xlongrightarrow{\,\pi_{\ast}\, }\
 \overline{\textrm{Eff}}^{\,k}(X/G)\ \xlongrightarrow{\,\pi^{\ast}\,}\ \overline{\textrm{Eff}}^{\,k}(X)^G
 \end{equation*}
 equals multiplication by $|G|$. The proof follows immediately.
\end{proof}

\begin{proposition}\label{nef}
The pullback $\pi^{\ast} : \emph{Nef}^{\,k}(X/G)\,\longrightarrow\, \emph{Nef}^{\,k}(X)^G$ and the pushforward $\pi_{\ast} : \emph{Nef}^{\,k}(X)^G\,\longrightarrow\, \emph{Nef}^{\,k}(X/G)$ are isomorphisms.  

\end{proposition}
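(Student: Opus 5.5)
The plan is to deduce the statement from Theorem \ref{pullback} by duality, using the perfect pairing $N^{k}\times N_{k}\to\mathbb{R}$ together with the isomorphisms of Corollary \ref{invariant}. The one compatibility I need is the projection formula on numerical classes. For $Q\in N^{k}(X/G)$ and $\alpha\in N_{k}(X)$,
\[
\deg(\pi^{\ast}Q\cap\alpha)\,=\,\deg(Q\cap\pi_{\ast}\alpha),
\]
which is \cite[Proposition 2.5(c)]{Fu} for Chern classes of pulled-back bundles, combined with the fact that proper pushforward preserves degree. Dually, the pushforward $\pi_{\ast}:N^{k}(X)\to N^{k}(X/G)$ of Proposition \ref{da} is by construction the transpose of $\pi^{\ast}:N_{k}(X/G)\to N_{k}(X)$. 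So for $P\in N^{k}(X)$ and $\beta\in N_{k}(X/G)$,
\[
\pi_{\ast}P(\beta)\,=\,P(\pi^{\ast}\beta).
\]

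\textbf{Step 1: $\pi^{\ast}$ sends nef classes into $\textrm{Nef}^{\,k}(X)^G$.} Take $Q\in\textrm{Nef}^{\,k}(X/G)$ and $\alpha\in\overline{\textrm{Eff}}_{k}(X)$. Then $\pi^{\ast}Q(\alpha)=Q(\pi_{\ast}\alpha)\ge 0$, because $\pi_{\ast}\alpha\in\overline{\textrm{Eff}}_{k}(X/G)$ by \cite[Corollary 3.22]{FL}. So $\pi^{\ast}Q$ is nef, and it is $G$-invariant since $\pi\circ g=\pi$.

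\textbf{Step 2: $\pi_{\ast}$ sends $\textrm{Nef}^{\,k}(X)^G$ into $\textrm{Nef}^{\,k}(X/G)$.} Take $P\in\textrm{Nef}^{\,k}(X)^G$ and $\beta\in\overline{\textrm{Eff}}_{k}(X/G)$. Then $\pi_{\ast}P(\beta)=P(\pi^{\ast}\beta)\ge 0$, because $\pi^{\ast}\beta\in\overline{\textrm{Eff}}_{k}(X)$ by Theorem \ref{pullback}.

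\textbf{Step 3: conclude.} By Corollary \ref{invariant}, both composites $\pi_{\ast}\pi^{\ast}$ and $\pi^{\ast}\pi_{\ast}$ (the latter on $N^{\ast}(X)^G$) equal $|G|\cdot\mathrm{Id}$. Multiplication by $|G|$ preserves each cone. Hence the restricted maps are mutually inverse up to the factor $|G|$, and so both are bijections of cones, in the same way as at the end of Theorem \ref{pseu}.

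The main obstacle is making sure the pushforward on $N^{\ast}$ really is the transpose of the flat pullback on $N_{\ast}$, and that the pullback on $N^{\ast}$ is the transpose of $\pi_{\ast}$ on $N_{\ast}$, so that both pairing identities above hold. The first is essentially the definition in Proposition \ref{da}. For the second, I would check it directly on Chern polynomials via the projection formula \cite[Proposition 2.5(c)]{Fu}.

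An alternative route uses the identification $(N_{k}(X)^G)^{\vee}=N^{k}(X)^G$ from Corollary \ref{canonical}. A $G$-invariant $P$ is nef on $X$ if and only if it is nonnegative on $\overline{\textrm{Eff}}_{k}(X)^G$, since $P(\alpha)=P\bigl(\frac{1}{|G|}\sum_{g}g\cdot\alpha\bigr)$ and averaging preserves pseudoeffectivity. The statement then becomes the dual of the cone isomorphism in Theorem \ref{pullback}.
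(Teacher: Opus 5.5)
Your proof is correct, but your main route is organized differently from the paper's. Your ``alternative route'' is exactly the paper's argument. The paper simply dualizes the cone isomorphisms of Theorem \ref{pullback}, and that step tacitly needs two facts it leaves implicit:
\begin{enumerate}
\item the identification $(N_k(X)^G)^{\vee}=N^k(X)^G$ of Corollary \ref{canonical}, under which the transpose of $\pi^{\ast}$ is $\pi_{\ast}$ restricted to $N^k(X)^G$;
\item your averaging observation that the dual of $\overline{\textrm{Eff}}_{k}(X)^G$ inside $N^k(X)^G$ is exactly $\textrm{Nef}^{\,k}(X)^G$.
\end{enumerate}

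Your main route follows the pattern of the proof of Theorem \ref{pseu}. You check elementwise, via the two adjunction identities, that $\pi^{\ast}$ and $\pi_{\ast}$ each preserve nefness. For this you only use that $\pi_{\ast}$ and $\pi^{\ast}$ on $N_k$ send pseudoeffective classes to pseudoeffective classes. Bijectivity then comes from the $|G|$-composites of Corollary \ref{invariant}.

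What this buys you: you never compare dual cones on the invariant subspace. As a byproduct, Step 2 shows that $\pi_{\ast}$ sends all of $\textrm{Nef}^{\,k}(X)$, not just its invariant part, into $\textrm{Nef}^{\,k}(X/G)$.

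What it costs: you rely on Corollary \ref{invariant}(ii), which the paper only asserts ``analogously''. It does follow from your pairing identity together with $\pi^{\ast}\pi_{\ast}\alpha=\sum_{g}g\cdot\alpha$, since for $G$-invariant $P$ one gets $(\pi^{\ast}\pi_{\ast}P)(\alpha)=P(\pi^{\ast}\pi_{\ast}\alpha)=|G|\,P(\alpha)$.

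One citation correction: for general Chern polynomials the projection formula is \cite[Proposition 3.2(c)]{Fu}. Proposition 2.5(c) there only covers pseudo-divisors, i.e.\ first Chern classes.
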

\begin{proof}
We have an isomorphism $\pi^{\ast} \,:\, \overline{\textrm{Eff}}_{\,k}(X/G)\, \longrightarrow\,
\overline{\textrm{Eff}}_{\,k}(X)^G$ (see Theorem \ref{pullback}). By taking dual, we deduce that $\pi_{\ast}$ is an isomorphism between the Nef cones. The other claim follows similarly from noting that $\pi_{\ast} \,:\, \overline{\textrm{Eff}}_{k}(X)^G\ \longrightarrow\
\overline{\textrm{Eff}}_{k}(X/G)$ is an isomorphism.    
\end{proof}

\begin{proposition}\label{equality}
The following two statements are equivalent for all $k\,\in\, \{1,\,\cdots,\,n-1\}$:
\begin{enumerate}
\item[(i)] $\overline{\emph{Eff}}^{\,k}(X)^G\ =\ \emph{Nef}^{\,k}(X)^G$.

\item[(ii)] $\overline{\emph{Eff}}^{\,k}(X/G)\ =\ \emph{Nef}^{\,k}(X/G)$.
\end{enumerate}
\end{proposition}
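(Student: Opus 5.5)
The plan is to transport the equality of cones through a single linear isomorphism that identifies both cones simultaneously. By Corollary \ref{invariant}, $\pi^{\ast}\colon N^{k}(X/G)\to N^{k}(X)^{G}$ is a linear isomorphism with inverse $\frac{1}{|G|}\pi_{\ast}$. Theorem \ref{pseu}(i) shows that this same map restricts to a bijection $\overline{\textrm{Eff}}^{\,k}(X/G)\to\overline{\textrm{Eff}}^{\,k}(X)^G$, and Proposition \ref{nef} shows that it restricts to a bijection $\textrm{Nef}^{\,k}(X/G)\to\textrm{Nef}^{\,k}(X)^G$. The key point is that both identifications come from the one map $\pi^{\ast}$.

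With that in hand, the equivalence is formal. Since $\pi^{\ast}$ is injective on all of $N^{k}(X/G)$, two subsets $A,B\subset N^{k}(X/G)$ satisfy $A=B$ if and only if $\pi^{\ast}(A)=\pi^{\ast}(B)$. Take $A=\overline{\textrm{Eff}}^{\,k}(X/G)$ and $B=\textrm{Nef}^{\,k}(X/G)$. Their images are $\overline{\textrm{Eff}}^{\,k}(X)^G$ and $\textrm{Nef}^{\,k}(X)^G$, so (ii)$\Leftrightarrow$(i). Equivalently, one can argue with $\pi_{\ast}$ and the isomorphisms $\overline{\textrm{Eff}}^{\,k}(X)^G\to\overline{\textrm{Eff}}^{\,k}(X/G)$ and $\textrm{Nef}^{\,k}(X)^G\to\textrm{Nef}^{\,k}(X/G)$, using injectivity of $\pi_{\ast}$ on $N^{k}(X)^G$.

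The main obstacle is checking that Proposition \ref{nef} really uses the same map $\pi^{\ast}$ on $N^{k}$ that appears in Theorem \ref{pseu}. The pullback $\pi^{\ast}$ on $N^{\ast}$ was obtained in Corollary \ref{invariant} as the dual of $\pi_{\ast}$ on $N_{\ast}$, after identifying $(N_{k}(X)^G)^{\vee}$ with $N^{k}(X)^G$. So I would verify the nef statement directly, as follows.

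For $P\in N^{k}(X/G)$ and $\alpha\in N_{k}(X)^G$, the definition of $\pi^{\ast}$ as a dual map gives the projection-formula pairing $\langle \pi^{\ast}P,\alpha\rangle=\langle P,\pi_{\ast}\alpha\rangle$. By Theorem \ref{pullback}, $\pi_{\ast}$ maps $\overline{\textrm{Eff}}_{k}(X)^G$ onto $\overline{\textrm{Eff}}_{k}(X/G)$. Hence $P$ is nef on $X/G$ if and only if $\pi^{\ast}P$ is nonnegative on $\overline{\textrm{Eff}}_{k}(X)^G$.

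It remains to show that a $G$-invariant class which is nonnegative on $\overline{\textrm{Eff}}_{k}(X)^G$ is nonnegative on all of $\overline{\textrm{Eff}}_{k}(X)$. For this I would use averaging. If $Q$ is $G$-invariant, then $\langle Q,\beta\rangle=\langle Q,\frac{1}{|G|}\sum_{g\in G} g\cdot\beta\rangle$. The averaged class stays pseudoeffective because each $g_{\ast}$ preserves effectivity and is continuous. This gives $\pi^{\ast}(\textrm{Nef}^{\,k}(X/G))=\textrm{Nef}^{\,k}(X)^G$, with $\pi^{\ast}$ the same linear map as before, and the formal argument above then concludes the proof.
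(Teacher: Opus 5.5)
Your proposal is correct and follows essentially the paper's route. Both arguments transport the equality through the isomorphisms $\overline{\textrm{Eff}}^{\,k}(X/G)\cong\overline{\textrm{Eff}}^{\,k}(X)^G$ and $\textrm{Nef}^{\,k}(X/G)\cong\textrm{Nef}^{\,k}(X)^G$ of Theorem \ref{pseu} and Proposition \ref{nef}. The paper does this by a diagram chase using $\pi_{\ast}\circ\pi^{\ast}=|G|\cdot\mathrm{Id}$, while you use injectivity of the single map $\pi^{\ast}$, which handles both directions at once.

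Your averaging check is a useful addition. It shows that a $G$-invariant class nonnegative on $\overline{\textrm{Eff}}_{k}(X)^G$ is nonnegative on all of $\overline{\textrm{Eff}}_{k}(X)$, and the paper's proof of Proposition \ref{nef} leaves this step unstated.
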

\begin{proof}
Assume that $\overline{\textrm{Eff}}^{\,k}(X)^G\,\subset\, \textrm{Nef}^{\,k}(X)^G$. Consider the following commutative diagram:
\begin{equation*}
\begin{tikzcd} 
\overline{\textrm{Eff}}^{\,k}(X/G) \arrow[d, "\pi^{\ast}"] \arrow[r, hook, dashed]
 & \textrm{Nef}^{\,k}(X/G) \\
\overline{\textrm{Eff}}^{\,k}(X)^G \arrow[r, hook] & \textrm{Nef}^{\,k}(X)^G \arrow[u, swap, "\pi_{\ast}"] 
\end{tikzcd}
\end{equation*}
Take any \( P \,\in\, \overline{\textrm{Eff}}^{\,k}(X/G) \). Then $\pi^{\ast}(P)\, \in\,
\overline{\textrm{Eff}}^{\,k}(X)^G$, and by assumption, also lies in \( \textrm{Nef}^{\,k}(X)^G \). From
the fact that $\pi_{\ast}(\pi^{\ast}(P)) \,=\, |G| \cdot P\,\in\, \textrm{Nef}^{\,k}(X/G)$
we have \( P\, \in\, \textrm{Nef}^{\,k}(X/G) \), and hence
\[
\overline{\textrm{Eff}}^{\,k}(X/G)\ \subset\ \textrm{Nef}^{\,k}(X/G).
\]
Now assume $\textrm{Nef}^{\,k}(X)^G\,=\,\overline{\textrm{Eff}}^{\,k}(X)^G$. Then all solid arrows in the diagram above are bijections; thus, the dotted arrow is also a bijection. So, the desired equality follows.

The reverse implication follows from a similar argument, using a diagram analogous to the one above in which the roles of pullback and pushforward have been interchanged.
\end{proof}

\begin{corollary}
If $\overline{\emph{Eff}}^{\,k}(X)\ =\ \emph{Nef}^{\,k}(X)$, then $\overline{\emph{Eff}}^{\,k}(X/G)\ =\ \emph{Nef}^{\,k}(X/G)$ for all k.    
\end{corollary}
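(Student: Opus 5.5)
The corollary should follow from Proposition \ref{equality} by restricting the hypothesis to $G$-invariant classes. Assume $\overline{\textrm{Eff}}^{\,k}(X)\,=\,\textrm{Nef}^{\,k}(X)$ as subsets of $N^{k}(X)$. The invariant cones were defined as intersections with the invariant subspace, $\overline{\textrm{Eff}}^{\,k}(X)^G\,=\,\overline{\textrm{Eff}}^{\,k}(X)\cap N^{k}(X)^G$ and $\textrm{Nef}^{\,k}(X)^G\,=\,\textrm{Nef}^{\,k}(X)\cap N^{k}(X)^G$. Intersecting both sides of the assumed equality with $N^{k}(X)^G$ therefore gives
\[
\overline{\textrm{Eff}}^{\,k}(X)^G\ =\ \textrm{Nef}^{\,k}(X)^G,
\]
which is statement (i) of Proposition \ref{equality}. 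That proposition then gives statement (ii), namely $\overline{\textrm{Eff}}^{\,k}(X/G)\,=\,\textrm{Nef}^{\,k}(X/G)$.

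The only point needing care is the range of $k$. Proposition \ref{equality} is stated for $1\le k\le n-1$, while the corollary says ``for all $k$''. I would handle the remaining values directly.

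\textbf{Case $k=0$.} Here $N^{0}$ and $N_{0}$ are both one-dimensional for $X$ and for $X/G$. The cyclification map sends the unit class to the fundamental class $[X]$ or $[X/G]$, so both $\overline{\textrm{Eff}}^{\,0}$ and $\textrm{Nef}^{\,0}$ are the non-negative ray, and equality holds trivially.

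\textbf{Case $k=n$.} Here $N^{n}$ is dual to $N_n\cong\mathbb R$. The nef cone is dual to the effective ray spanned by the fundamental class. The pseudoeffective dual cone consists of those $P$ with $P\cap[X]\in\overline{\textrm{Eff}}_0$, and that condition is the same as $\deg(P\cap[X])\ge 0$. So both cones are the ray of classes with non-negative degree on the fundamental class, and they coincide. The same argument applies to $X/G$.

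Alternatively, one could check that the proof of Proposition \ref{equality} never used $k\le n-1$. It relies only on Theorem \ref{pseu} and Proposition \ref{nef}, and both hold for every $k$. I would state this observation as a remark so that the phrase ``for all $k$'' is fully justified.

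No step should present a genuine obstacle. The one thing to confirm is that the $G$-invariant cones appearing in Proposition \ref{equality} are exactly the intersections of the full cones with the invariant subspace, which holds by the definitions used in Theorems \ref{pullback} and \ref{pseu}.
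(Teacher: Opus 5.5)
Your argument is correct and is the route the paper intends. The paper states the corollary without proof right after Proposition \ref{equality}; it follows by intersecting the assumed equality with $N^{k}(X)^G$ to obtain condition (i) of that proposition. Your direct check of $k=0$ and $k=n$ is a small addition the paper leaves implicit, and it is valid: in those degrees both cones are the same ray on any projective variety.
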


\begin{remark}
We note that Theorem \ref{lower} implies that both the pullback and pushforward maps
\begin{equation*}
N_{\ast}(X/G)\,\longrightarrow\,N_{\ast}(X)^{G} \quad\textrm{and}\quad N_{\ast}(X)^{G}\,\longrightarrow\, N_{\ast}(X/G)
\end{equation*}
are isomorphisms. Consequently, it suffices to determine the generators of just one of these groups, as
the corresponding generators in the other group may be immediately determined via the knowledge of
the appropriate pullback or pushforward map.
\end{remark}

\section*{Acknowledgements}

We are very grateful to the anonymous referee for suggesting a broader question and providing a supporting 
reference, which led to a substantial improvement of the manuscript. The first and third named authors are 
grateful to Professor D. S. Nagaraj for many helpful discussions.  The second named author is partially 
supported by a J. C. Bose Fellowship (JBR/2023/000003).


\begin{thebibliography}{llllllll}


\bibitem[Da]{Da} N.B. Dang, Degrees of iterates of rational maps on normal projective varieties, \emph{Proc. Lond. Math. Soc.} {\bf 121} (2020), 1268--1310.
We use the result from the arXiv version: arXiv:1701.07760.

\bibitem[Fu]{Fu} W. Fulton, \emph{Intersection theory, Second edition}, Springer-Verlag, Berlin, 1998.


\bibitem[FL]{FL} M. Fulger and B. Lehmann, Positive cones of dual cycle classes, \emph{Algebraic Geometry}
{\bf 4} (2017), 1--28.

\bibitem[Ful]{Ful} M. Fulger, The cones of effective cycles on projective bundles over curves, \emph{Math. 
Zeit.} {\bf 269} (2011), 449--459.

\bibitem[SP]{SP} The Stacks Project Authors, Stacks Project, {\url{https://stacks.math.columbia.edu}} (2018).
\end{thebibliography}
\end{document}